\newtheorem{theorem}{Theorem}[section]
\newtheorem{lemma}[theorem]{Lemma}
\theoremstyle{definition}
\newtheorem{definition}[theorem]{Definition}
\newtheorem{corollary}[theorem]{Corollary}
\theoremstyle{remark}
\newtheorem{remark}[theorem]{Remark}
\numberwithin{equation}{section}
\newcommand{\abs}[1]{\lvert#1\rvert}
\newcommand{\N}{{\mathbb{N}}}
\newcommand{\R}{{\mathbb{R}}}
\newcommand{\Z}{{\mathbb{Z}}}
\newcommand{\bbf}{{\bf f}}
\newcommand{\bx}{{\bf x}}
\newcommand{\by}{{\bf y}}
\newcommand{\bs}{{\bf s}}
\newcommand{\cB}{{\mathcal B}}
\newcommand{\cE}{{\mathcal E}}
\newcommand{\cG}{{\mathcal G}}
\newcommand{\cH}{{\mathcal H}}
\newcommand{\cN}{{\mathcal N}}
\newcommand{\cX}{{\mathcal X}}
\newcommand{\setof}[1]{\left\{ {#1}\right\}}
\newcommand{\pd}{{\mathsf{PD}}}
\newcommand{\cl}{ \mbox{\rm{cl}}}
\newcommand{\norm}[1]{\left\Vert#1\right\Vert}
\newcommand{\im}{\mbox{\rm{im}}}
\newcommand{\bd}{ \mbox{\rm{bd}}}
\newcommand{\inter}{ \mbox{\rm{int}}}
\begin{document}

\title{On $\varepsilon$ Approximations of Persistence Diagrams}

\author{Jonathan Jaquette}
\address{Department of Mathematics,
Hill Center-Busch Campus,
Rutgers University,
110 Frelinghusen Rd,
Piscataway, NJ  08854-8019, USA}
\email{jaquette@math.rutgers.edu}
\thanks{The first author's research was funded in part by AFOSR Grant FA9550-09-1-0148 and NSF Grant DMS-0915019.}

\author{Miroslav Kram\' ar}
\address{Department of Mathematics,
Hill Center-Busch Campus,
Rutgers University,
110 Frelinghusen Rd,
Piscataway, NJ  08854-8019, USA}
\email{miroslav@math.rutgers.edu}
\thanks{The second author's research was funded in part by NSF Grants DMS-1125174 and DMS-0835621.}


\begin{abstract}

Biological and physical systems often exhibit distinct structures at different spatial/temporal scales. Persistent homology is an algebraic tool that provides a mathematical framework for analyzing the multi-scale structures frequently observed in nature. In this paper  a theoretical framework for the algorithmic computation of an arbitrarily good approximation of the persistent homology is developed. We study the filtrations generated by sub-level sets of a function $f \colon X \to \R$, where $X$ is a CW-complex. In the special case $X = [0,1]^N$, $N \in \N$ we discuss  implementation of the proposed algorithms. We also investigate  \emph{a priori} and  \emph{a posteriori} bounds of the approximation error   introduced by our method.

\end{abstract}

\maketitle
\section{Introduction}
\label{sec::Intro}

The formation of complex patterns can be encountered in almost all sciences.  Many patterns observed in  nature are extremely complicated and  it is usually impossible  to completely understand  them   using  analytical methods. Often a coarser but computationally tractable description is needed. In recent years computational topology \cite{MR2572029, KMM04computational} has become widely recognized as an important tool for quantifying complex structures.   Computational homology has been used  to quantitatively study spatio-temporal chaos \cite{PhysRevE.70.035203} as well as to  analyze complicated flow patterns generated by Rayleigh-B\'enard convection \cite{KKSGMM07, FLM:8352228}, time evolution of the isotropically  compressed granular material \cite{epl12} and  complex microstructures generated in binary metal alloys through a process called spinodal decomposition \cite{Gameiro05evolutionof, MTSV06Evolution}. 
Unfortunately homology groups can be extremely sensitive to small perturbations \cite{pD}. Persistent homology  \cite{HH08survey} overcomes this problem  \cite{MR2279866} and provides a more powerful tool for studying evolution of complex patterns in the granular media \cite{pD,pre13} and fluid dynamics \cite{RBC-KF}.

In the applications mentioned above  the pattern is given by a sub- or super-level set of a real valued function $f$. Therefore analyzing the patterns usually  involves numerical study  based on a suitable discretization. It is important to know when the homology groups of a sub-level set of $f$ can be inferred from a discrete approximation of this set. 
Probabilistic results, for a  function $ f \colon [a,b] \subset \R \to \R$, are presented in    \cite{MW07random}. 
An alternative approach is to  construct a cubical approximation of the sub-level set with the same homology groups \cite{DKW09verified}. 
The algorithm presented in \cite{DKW09verified} can handle functions defined on a unit square and uses a uniform grid on $[0,1]^2$. If the resolution of a grid is fine enough, then the sub-level set can be approximated by a collection of the grid elements that intersect the sub-level set. This often leads to an unnecessary large complex. The size of the complex can be reduced  by using a randomized version of the algorithm \cite{CWD13randomized}.
Using a regular CW complex to construct the nodal approximation \cite{DKMW10coreduction} simplifies the  homology computations.

In this paper we develop a method  for approximating the sub-level sets  of a continuous function  $f : X \to \R$, where $X$ is a regular CW complex.
Note that the sub-level set $X_t = f^{-1}(-\infty, t] = \setof{ x \in X \colon f(x) \leq t}$  does not have to be a CW complex. 
The following definition provides conditions on a CW structure $\cE$ of $X$ under which the singular homology groups of the CW complex 
\[
\cX_t = \bigcup \setof{ e \in \cE \colon f(\cl(e)) \leq t },
\]
are isomorphic to the homology groups $H_*(X_t)$. Throughout the paper, we use the notation ``$f(\cl(e)) \leq t$'' to mean ``$f(x)\leq t$ for all $x \in cl(e)  $''.

\begin{definition}
\label{def::CompatibleCW}
Let $X$ be a finite regular  complex  with a CW structure $\cE$. Suppose that  $f:X \to \R$ is  a continuous function and  $t\in\R$.  We say that the CW structure $\cE$ is \emph{compatible} with $X_t$ if   every cell $e \in \cE$  satisfies one of the following conditions:
\begin{enumerate}
\item $f(\cl(e)) > t$,
\item $f(\cl(e)) \leq t$,
\item there exists a deformation retraction  $h(\bx,s) : X_t \cap \cl(e) \times  [0,1] \to  X_t \cap \cl(e)$     of the set $X_t \cap \cl(e)$ onto $X_t \cap \bd(e)$.
\end{enumerate}
\end{definition}

According to the following theorem, proved in Section~\ref{sec::HomologyOnCW}, the singular homology groups $H_*(X_t)$  can be obtained by computing the  cellular homology groups of the complex $\cX_t$.

\begin{theorem}
\label{prop::IntroCorrect_homology}
Let $X$ be a finite regular CW complex. If the CW decomposition $\cE$ of $X$  is compatible with $X_t$, then the map $i_*: H_*(\cX_t) \to H_*(X_t) $, induced by the inclusion $i : \cX_t \to X_t$, is an isomorphism.
\end{theorem}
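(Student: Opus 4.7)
The strategy is to build $X_t$ up from $\cX_t$ by attaching one condition-(3) cell at a time in order of non-decreasing dimension, and to show that each such attachment does not change the homotopy type. This reduces the theorem to an inductive application of Definition \ref{def::CompatibleCW}(3).

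Concretely, partition $\cE$ as $\cE = \cE_1 \sqcup \cE_2 \sqcup \cE_3$ according to which clause of the definition each cell satisfies. If $e \in \cE_1$ then $X_t \cap \cl(e) = \emptyset$; if $e \in \cE_2$ then $\cl(e) \subset \cX_t$; and in all cases $X_t = \cX_t \cup \bigcup_{e \in \cE_3}(X_t \cap \cl(e))$. Enumerate $\cE_3 = \{e_1, \dots, e_m\}$ so that $\dim(e_j)$ is non-decreasing in $j$, and set
\[
Y_0 := \cX_t, \qquad Y_k := Y_{k-1} \cup \bigl(X_t \cap \cl(e_k)\bigr), \qquad Y_m = X_t.
\]
The first task is to identify $Y_{k-1} \cap \cl(e_k)$ with $X_t \cap \bd(e_k)$. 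One containment comes from regularity of $\cE$: every proper face of $e_k$ is a cell of strictly smaller dimension, so it is either in $\cE_1$ (contributing nothing), in $\cE_2$ (already inside $\cX_t \subset Y_{k-1}$), or in $\cE_3$ with index $<k$ (already inside $Y_{k-1}$). The reverse containment uses that the open cell $e_k$ is disjoint from every other cell's closure in a CW complex, together with the observation that $e_k$ cannot lie in the closure of any cell of $\cE_2$ (otherwise $e_k$ would itself satisfy condition (2), contradicting $e_k \in \cE_3$).

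With this identification in hand, the inductive step is to promote the deformation retraction $h \colon (X_t \cap \cl(e_k)) \times [0,1] \to X_t \cap \cl(e_k)$ of condition (3) to a deformation retraction of $Y_k$ onto $Y_{k-1}$. Since $h$ restricts to the identity on $X_t \cap \bd(e_k) = Y_{k-1} \cap \cl(e_k)$ throughout the homotopy, the pasting lemma lets us extend $h$ by the identity on $Y_{k-1}$; continuity follows because both $Y_{k-1}$ and $X_t \cap \cl(e_k)$ are closed in $Y_k$ (each being a finite union of intersections with closed cells). This yields a homotopy equivalence $Y_{k-1} \hookrightarrow Y_k$, hence an isomorphism on singular homology induced by inclusion. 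Iterating from $k=1$ to $k=m$ and composing gives the desired isomorphism $i_* \colon H_*(\cX_t) \to H_*(X_t)$, upon identifying cellular and singular homology of the CW complex $\cX_t$.

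The main obstacle is the bookkeeping in the identification $Y_{k-1} \cap \cl(e_k) = X_t \cap \bd(e_k)$: it is here that both the non-decreasing dimension ordering and the regularity of the CW structure are essential, since otherwise a cell in $\bd(e_k)$ might itself be a condition-(3) cell whose $X_t$-intersection has not yet been adjoined. A secondary technical point, which I would address by appealing to the standard convention for CW complexes, is to interpret the deformation retraction of condition (3) as a \emph{strong} deformation retract so that the pasting extension is well-defined; if only a weak deformation retract is assumed, the same conclusion can be reached via Mayer--Vietoris applied to the decomposition $Y_k = Y_{k-1} \cup (X_t \cap \cl(e_k))$ with intersection $X_t \cap \bd(e_k)$, since the inclusion of the intersection into $X_t \cap \cl(e_k)$ is then still a homology isomorphism.
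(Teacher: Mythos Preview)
Your argument is correct and takes a somewhat different route from the paper's.  The paper proves $H_*(X_t,\cX_t)\cong 0$ by induction on the \emph{dimension of $X$}: it peels off one top-dimensional cell at a time, treating the three cases of Definition~\ref{def::CompatibleCW} separately (trivial for case~(1), excision for case~(2), extending the retraction for case~(3)), and invokes the long exact sequence of the pair.  You instead start from $\cX_t$ and adjoin only the condition-(3) cells, in non-decreasing dimension, showing each inclusion $Y_{k-1}\hookrightarrow Y_k$ is a homotopy equivalence via the pasting lemma (or a homology isomorphism via Mayer--Vietoris).  Your route is more economical: condition-(2) cells never need separate treatment since they already sit in $\cX_t$, and under the strong-retract reading you actually obtain a homotopy equivalence, not just a homology isomorphism.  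The paper's approach has the minor advantage of packaging everything as a relative-homology vanishing statement, which fits cleanly with the long exact sequence machinery used later.

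One small phrasing issue: your sentence ``the open cell $e_k$ is disjoint from every other cell's closure in a CW complex'' is false as written (a vertex lies in the closure of an incident edge).  What you need, and what your subsequent clauses actually establish, is that $e_k$ is disjoint from $\cl(e)$ whenever $e\in\cE_2$ or $e=e_j$ with $j<k$; this follows because in a regular CW complex $\cl(e)$ is a union of cells of dimension at most $\dim(e)\le\dim(e_k)$, so $e_k$ could only meet $\cl(e)$ by being a face of $e$, which you correctly rule out.  With that correction the identification $Y_{k-1}\cap\cl(e_k)=X_t\cap\bd(e_k)$ goes through.  Your observation that the gluing step requires a \emph{strong} deformation retract is well taken; the paper's own proof tacitly makes the same assumption when it extends $h$ by the identity on $X_t\setminus e$.
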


Due to computational complexity of computing homology, it is extremely important that the number of cells  in the complex $\cX_t$ is  as small as possible. To achieve this goal we use a multi-scale CW structure $\cE$. The diameter (size) of the cells in $\cE$ varies   with  local complexity  of the pattern.  Larger cells are used at places with smaller complexity while  small cells might be necessary at the places where  spacial scales of the pattern are small.
To produce a multi-scale CW structure we start from a coarse CW structure of $X$ and keep refining the cells that do not satisfy any of the  conditions in Definition~\ref{def::CompatibleCW}.  For $X = [0,1]^2$ the number of cells in $\cX_t$  is smaller than the number of cells in the cubical complex used in \cite{DKW09verified,DKMW10coreduction}.

To facilitate the refinement  process  we introduce  self similar grids.  The size of the grid elements might vary but all the grid elements of $\cG$ are homeomorphic  to $X$. The resolution of the self similar grid $\cG$ can be dynamically adjusted at different parts of $X$. To increase the resolution, each grid element can be refined into a union of smaller complexes homeomorphic to $X$. 
We represent a self similar grid by a tree structure. The root of this tree corresponds to  the whole  complex $X$. The $i$-th level nodes represent the complexes obtained by $i$ refinements of $X$. Finally, the leaves correspond to the grid elements of $\cG$. 

The tree representation of $\cG$ provides a basis for a memory efficient data structure for storing the complex $\cX_t$ and its boundary operator. We stress that the boundary operator is not stored as a large matrix. Instead it is computed using the boundary operator $\partial \colon H_*(X) \to H_{*-1}(X)$ corresponding to the coarse (unrefined) CW structure, the tree structure representing the self similar grid and geometric information about intersections of the grid elements.

While constructing a compatible CW structure for an arbitrary regular CW complex $X$ might be challenging, it is  straight forward  when $X = [0,1]^N$, where $N \in \N$.  The coarse CW structure of $[0,1]^N$, employed in this paper, is given by standard decomposition  of a cube into vertices, edges and higher dimensional faces (cells). A self similar grid $\cG$  on $[0,1]^N$ consists of dyadic cubes of different sizes. If every cube (grid element) in $\cG$ satisfies the following definition, then every cell in the CW structure $\cE$ generated by the grid $\cG$ satisfies one of the conditions in  Definition~\ref{def::CompatibleCW}. Moreover, if a cell $e\in \cE_{s_i}$ satisfies Condition~(3) in Definition~\ref{def::CompatibleCW}, then the function $f\circ h(\bx,s)$ can be taken to be non-increasing in $s$.

\begin{definition}
\label{def::IntroAnalyticVerified}
Let $f\in C^1([0,1]^N,\R)$ and $C$ be an $N$ dimensional dyadic cube. We say that $C$   is $(f,t)$-\emph{verified} if 
$0 \not\in \frac{\partial f}{\partial x_{l_i}}(C)$ for some coordinate vectors  $x_{l_1}, \ldots, x_{l_n}$, where $0 \leq n \leq N$,   and  for every $N-n$ dimensional cell $e$ of the cube $C$ which is orthogonal to the vectors  $x_{l_1}, \ldots, x_{l_n}$, then either $ f(\cl(e)) > t $ or $f(\cl(e)) \leq  t $.
\end{definition}

A cell $e$ is considered to be orthogonal to a collection of vectors $v_1, \dots , v_n$ if the projection of $e$ onto $\mbox{span}\{ v_1, \dots , v_n\}$ is zero dimensional. 
The condition in Definition~\ref{def::IntroAnalyticVerified} can be checked using interval arithmetics. Therefore the homology groups $H_*(X_t)$ can be evaluated on a computer.  However, if $t$ is close to a critical value, then using interval arithmetics to guarantee that the cubes surrounding the critical point satisfy Definition~\ref{def::IntroAnalyticVerified} might require an extremely fine resolution or might not be possible at all. Another problem with this approach is that the topology of the set $X_t$  depends on the  choice of the threshold  $t$, and small changes of $t$ can lead to large changes of the topology as demonstrated in \cite{pD}. So  the particular choice of the threshold $t$ might be hard to justify. Both of these problems can be overcome by using the persistent homology \cite{HH08survey, MR2572029}.

Persistent homology relates the homology groups of $X_t$ for different values of $t$.   
It reduces the function $f$ to a collection of points in the plane. This collection of points is called a {\em persistence diagram} and denoted by $\pd(f)$. Each point in the persistence diagram encodes a well defined geometric feature of $f$. Hence the persistent homology provides a finer description of the pattern than the homology groups of any one sub-level set. The set of all persistence diagrams can be turned into a complete metric space by using a {\em Bottleneck distance} $d_B$ between the diagrams~\cite{MR2279866, 0266-5611-27-12-124007}.

The main contribution of this paper is a theoretical  framework for constructing an $\varepsilon$ approximation of the persistence diagram $\pd(f)$. For  $\varepsilon >  0$ and a continuous function $f \colon X \to \R$ we define an  $\varepsilon$ approximate filtration of $f$ as follows.

\begin{definition}
Let $X$ be a finite regular CW complex and $f : X \to \R$  a continuous function.  Suppose that a sequence of real numbers $\setof{s_i}_{i=-1}^{m+1}$ has the following properties:
\begin{enumerate}
\item $s_{-1} = -\infty$ and  $s_0 <  \min_{x\in X} f(x)$, 
\item $s_{m} > \max_{x\in X} f(x)$ and $s_{m+1} = \infty$,
\item $\abs{s_{i} - s_{i-1}} < \varepsilon$ for $1 \leq i \leq m$.
\end{enumerate}
Then the collection of sub-level sets  $\setof{X_{s_i}}_{i = -1}^{m+1}$ is called an $\varepsilon$ \emph{approximate filtration} of $f$. 
\label{def::ApproximateFiltration}
\end{definition}

It follows from the stability results \cite{MR2279866} that the persistence diagram $\pd'$ of any $\varepsilon$ approximate filtration of $f$ is close to $\pd(f)$. More precisely 
\begin{equation}
\label{eqn::IntroEstimate}
d_B(\pd(f), \pd') < \varepsilon.
\end{equation}
Construction of the persistence diagram $\pd'$ requires determining homology groups of the sub-level sets in some $\varepsilon$ approximate filtration of $f$. Freedom in choosing an $\varepsilon$ approximate filtration of $f$ allows us to stay away from the critical values.  In order to compute the persistence diagram of an  $\varepsilon$ approximate filtration $\setof{X_{s_i}}_{i = -1}^{m+1}$ one might be tempted to replace this filtration by its cellular counterpart $\setof{\cX_{s_i}}_{i = -1}^{m+1}$.
However  there is no guarantee that $\cX_{s_i} \subseteq \cX_{s_{i+1}}$ and so $\setof{\cX_{s_i}}_{i=-1}^{m+1}$ may not be a filtration. (See Figure~\ref{fig::NotAFiltration} for a counterexample.)
In order to construct a cellular filtration we have to require that the CW structures $\setof{\cE_{s_i}}_{i=-1}^{m+1}$ are commensurable.

\begin{definition}
Let $\setof{\cE_{s_i}}_{i = -1}^{m+1}$ be a collection of CW structures on $X$. We say that they are {\em commensurable}  if for every  $e, e' \in \cE' := \bigcup \cE_{s_i}$ such that $e \cap e' \neq \emptyset$ then either $e\subseteq e'$ or   $e'\subseteq e$.
\label{def::commensurable}
\end{definition}

The CW structures $\setof{\cE_{s_i}}_{i = -1}^{m+1}$ constructed using a  self similar grid are always commensurable. The CW complexes  $\setof{\bar{\cX}_{s_i}}_{i =-1}^{m+1}$  define by   $\bar{\cX}_{s_i} = \bigcap_{j \geq i} \cX_{s_j}$ form an $\varepsilon$ approximate cellular filtration of $f$. This filtration has the following property.  

\begin{theorem}
\label{prop::ApproximateFiltration}
Let $\setof{\bar{\cX}_{s_i}}_{i = -1}^{m+1}$  be an $\varepsilon$ approximate cellular filtration of $f$ corresponding to  $\setof{\cE_{s_i}}_{i = -1}^{m+1}$.
Suppose that for every cell $e\in \cE_{s_i}$ which satisfies Condition~(3) in Definition~\ref{def::CompatibleCW}, the function $f\circ h(\bx,s)$ is non-increasing in $s$. Then the persistence diagrams of the filtrations $\setof{{X}_{s_i}}_{i = -1}^{m+1}$  and $\setof{\bar{\cX}_{s_i}}_{i = -1}^{m+1}$ are equal.
\end{theorem}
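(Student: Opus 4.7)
The plan is to upgrade the level-wise isomorphism supplied by Theorem~\ref{prop::IntroCorrect_homology} into a natural isomorphism of persistence modules induced by the inclusions $\bar{\cX}_{s_i}\hookrightarrow X_{s_i}$; equality of the persistence diagrams then follows because pointwise finite-dimensional persistence modules are classified up to isomorphism by their diagrams. Commutativity of the ladder of inclusions is automatic because every horizontal and vertical arrow is an inclusion of subsets of $X$, so the substantive task is to verify that each vertical map $H_*(\bar{\cX}_{s_i})\to H_*(X_{s_i})$ is an isomorphism.

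The key technical input is that the non-increasing hypothesis promotes each cell-level deformation retraction to one compatible with every lower sub-level set. If $e\in\cE_{s_j}$ is a Case~(3) cell with retraction $h:X_{s_j}\cap\cl(e)\times[0,1]\to X_{s_j}\cap\cl(e)$ and $s_i\le s_j$, then for $x\in X_{s_i}\cap\cl(e)$ the non-increasing assumption gives $f(h(x,s))\le f(x)\le s_i$, so the trajectory stays in $X_{s_i}$ and $h$ restricts to a deformation retraction of $X_{s_i}\cap\cl(e)$ onto $X_{s_i}\cap\bd(e)$. Gluing the cell-level retractions cell-by-cell as in the proof of Theorem~\ref{prop::IntroCorrect_homology} yields a global deformation retraction $H_j:X_{s_j}\times[0,1]\to X_{s_j}$ onto $\cX_{s_j}$ whose restriction to $X_{s_i}$ is a deformation retraction onto $X_{s_i}\cap \cX_{s_j}$ for every $i\le j$.

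To assemble these restrictions into a single deformation of $X_{s_i}$ onto $\bar{\cX}_{s_i}$, I would exploit commensurability to form the common refinement $\cE' = \bigcup_{j\ge i}\cE_{s_j}$, which is itself a CW structure on $X$ whose cells are nested inside those of every $\cE_{s_j}$. Processing cells of $\cE'$ in decreasing order of dimension and applying, on each offending cell, the restriction of the retraction $H_j$ coming from the $\cE_{s_j}$ that produced it, one composes a homotopy pushing $X_{s_i}$ into the intersection $\bigcap_{j\ge i}\cX_{s_j}=\bar{\cX}_{s_i}$. The inclusions $\bar{\cX}_{s_i}\hookrightarrow X_{s_i}$ are therefore homotopy equivalences, which provides the required vertical isomorphisms.

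The main obstacle lies in the last step: one must verify that the iterated deformation terminates inside every $\cX_{s_\ell}$ simultaneously, not merely one $\ell$ at a time, since an $H_j$ can in principle move a point out of a Case~(2) cell of $\cE_{s_\ell}$ into an adjacent Case~(3) cell at the same level $\ell$. The order in which the $H_j$ are composed therefore matters, and I expect the detailed argument to require either a careful choice of ordering (highest-refinement levels first) or a simultaneous-deformation construction that uses commensurability — which forces each trajectory to remain in a single subcomplex of every $\cE_{s_\ell}$ — together with the non-increasing property to keep the trajectory inside the growing intersection $\bigcap_{\ell\le j}\cX_{s_\ell}$ at every stage. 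This delicate interplay between the nested structure of the decompositions and the monotonicity of $f\circ h$ is, I believe, the point where the commensurability hypothesis is genuinely needed.
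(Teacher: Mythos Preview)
Your reduction to showing that each inclusion $\bar{\cX}_{s_i}\hookrightarrow X_{s_i}$ induces an isomorphism on homology is correct, and your observation that the non-increasing hypothesis lets a Case~(3) retraction at level $s_j$ restrict to one at every lower level $s_i\le s_j$ is exactly the point of that hypothesis. The gap is the one you yourself flag: you never actually construct the global deformation retraction of $X_{s_i}$ onto $\bar{\cX}_{s_i}$, and the composition-of-retractions argument you sketch is genuinely delicate for the reason you state---an $H_j$ can push a point out of a Case~(2) cell of some other $\cE_{s_\ell}$, and you have not supplied an ordering or simultaneous construction that provably prevents this. A proposal that ends with ``I expect the detailed argument to require\ldots'' is not yet a proof.

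The paper sidesteps this obstacle by a different, purely algebraic route. Rather than retract $X_{s_i}$ directly onto $\bar{\cX}_{s_i}$, it factors the inclusion as $\bar{\cX}_{s_i}\hookrightarrow\cX_{s_i}\hookrightarrow X_{s_i}$. The second map is an isomorphism on homology by Theorem~\ref{prop::IntroCorrect_homology}, so the problem reduces to comparing two honest CW subcomplexes of the common refinement. Writing $\bar{\cX}_{s_i}=\cX_{s_i}\setminus\{e_1,\dots,e_N\}$, the paper shows that each excised cell $e_a$ sits inside a Case~(3) cell $e''_a\in\cE_{s_j}$ for some $j>i$, groups the $e_a$ by the maximal such $e''$, orders the groups by decreasing dimension, and peels them off one at a time. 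Each removal step is handled by Mayer--Vietoris and the five lemma: the retraction on $e''$ (restricted to level $s_i$ via the non-increasing hypothesis) gives $H_*(X_{s_i}\cap\bd(e''))\cong H_*(X_{s_i}\cap\cl(e''))$, and Theorem~\ref{prop::IntroCorrect_homology} applied to $\bd(e'')$ and $\cl(e'')$ transfers this to the cellular level $H_*(\cX_{s_i}\cap\bd(e''))\cong H_*(\cX_{s_i}\cap\cl(e''))$. Because this argument lives entirely in the CW world after the first factorisation, it never has to compose global homotopies across different $\cE_{s_j}$, which is precisely the difficulty your approach runs into.
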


\begin{corollary}
Let $\setof{\bar{\cX}_{s_i}}_{i = -1}^{m+1}$  be an $\varepsilon$ approximate cellular filtration of $f$.  If $\pd'$ is the persistence diagram of this filtration, then $d_B(\pd(f), \pd') < \varepsilon$.
\end{corollary}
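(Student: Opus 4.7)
The plan is to chain the two ingredients the paper has already laid down: Theorem~\ref{prop::ApproximateFiltration}, which equates the persistence diagram of the cellular approximate filtration with that of the continuous one, and the stability estimate \eqref{eqn::IntroEstimate}, which bounds the diagram of any $\varepsilon$ approximate filtration against $\pd(f)$.

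Concretely, I would proceed as follows. First, apply Theorem~\ref{prop::ApproximateFiltration} to the cellular filtration $\setof{\bar{\cX}_{s_i}}_{i=-1}^{m+1}$ in the hypothesis (implicitly assuming, as the theorem requires, that for every cell $e$ satisfying Condition~(3) of Definition~\ref{def::CompatibleCW} the deformation retraction $h$ may be chosen so that $f\circ h(\bx,s)$ is non-increasing in $s$; Definition~\ref{def::IntroAnalyticVerified} guarantees exactly this in the main case of interest). This identifies $\pd'$ with the persistence diagram of the continuous filtration $\setof{X_{s_i}}_{i=-1}^{m+1}$. Second, note that by Definition~\ref{def::ApproximateFiltration} this continuous filtration is an $\varepsilon$ approximate filtration of $f$. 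Third, invoke the stability inequality \eqref{eqn::IntroEstimate}, which asserts that the persistence diagram of any $\varepsilon$ approximate filtration of $f$ lies at bottleneck distance strictly less than $\varepsilon$ from $\pd(f)$. Composing these three observations yields $d_B(\pd(f),\pd')<\varepsilon$.

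There is no genuine obstacle here; the corollary is a one-step consequence once both results are in hand. The only care required is to make sure that the equality of persistence diagrams asserted by Theorem~\ref{prop::ApproximateFiltration} is understood as an equality of bottleneck-distance zero, so that it can legitimately be combined with the strict inequality from stability to yield the stated strict inequality $d_B(\pd(f),\pd')<\varepsilon$. Beyond that, the argument is purely a matter of citing the two previously established statements in the correct order.
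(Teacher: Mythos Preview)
Your proposal is correct and is precisely the argument the paper intends: the corollary is stated without proof as an immediate consequence of Theorem~\ref{prop::ApproximateFiltration} together with the stability estimate~\eqref{eqn::IntroEstimate} (proved as Lemma~\ref{prop::closeApproximation}). Your remark about the implicit non-increasing hypothesis on $f\circ h$ is well taken; the paper's later Corollary~\ref{prop::FinalResult} makes this explicit in the dyadic-cube setting via Theorem~\ref{thm::VerifiedMeansCompatible}.
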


The persistence diagram $\pd'$ can be computed even if we fail to construct  CW structures compatible with  $X_{s_i}$ for some of the thresholds $s_i$.  In this case the distance $d_B(\pd(f), \pd')$ can be evaluated \emph{a posteriori}. The required resources for computing the persistence homology can be further reduced if  zig-zag persistence \cite{Carlsson:2009:ZPH:1542362.1542408} is used.

The remainder of this paper is organized as follows. In Section~\ref{sec::Background} we recall the definition of a CW-complex, homology and persistent homology.  Theorem~\ref{prop::IntroCorrect_homology} is proven in Section~\ref{sec::HomologyOnCW}.
In Section~\ref{sec::PD} we address the Inequality~(\ref{eqn::IntroEstimate}).
A precise definition of a self similar grid is given in Section~\ref{sec:grid}. These grids are employed in Section~\ref{sec::CompatibleCW}  to build a compatible $CW$ structure. 
A memory efficient data structure for storing the CW complex $\cX_t$  and its  boundary operator is presented  in Section~\ref{sec:DataAndAlgorithms}.
In the last section we use our method to compute the approximate persistence diagram of a two dimensional Fourier series with $5$ modes and discuss the \emph{a posteriori} error estimates.

\section{Background}
\label{sec::Background}

In this section  we recall definitions of a CW-complex \cite{H01algebraic}, homology \cite{KMM04computational}  and persistent homology \cite{HH08survey}. Notation introduced in this section is used throughout  the paper.  We start by introducing a regular CW-complex.

Let $X$ be a Hausdorff space. A \emph{(finite) regular CW decomposition} of $X$ is a (finite) set $\cE$ of subspaces of $X$ with the following properties:

\begin{enumerate}
\item  $\cE$ is a partition of $X$, that is $X = \bigcup_{e\in\cE} e$ and $e \neq e' \Rightarrow e \cap e' = \emptyset,$
\item  Every $e\in\cE$ is homeomorphic to some euclidean space $\R^{\dim(e)},$
\item  For every $n$-cell  $e\in\cE$ there exists a continuous embedding $\Phi_e(B^n,S^{n-1}) \to (X^{n-1} \cup e, X^{n-1})$ such that $\Phi_ e(B^n) = \cl( e)$ where $\cl( e)$ is the closure of $ e$, $B^n = \setof{x \in \R^n : \norm{x} \leq 1}$ and $S^n = \setof{x \in B^{n+1} : \norm{x} = 1}$.
\end{enumerate} 
The number $\dim(e)$ is well determined and is called the dimension of $e$. The sets $e\in\cE$ which are homeomorphic to  $\R^n$ are the $n$\emph{-cells}. The boundary of a cell $e$ is defined by $\bd(e) := \cl(e) \setminus e$. The union $X^n = \bigcup_{dim(e)\leq n}e$ is the $n$\emph{-skeleton}  of the CW decomposition.
The map $\Phi_e$ is called a characteristic map for $e$ and $\varphi_e = \Phi_e|_{S^{n-1}} :  S^{n-1}  \to X^{n-1}$ an attaching map. 
The space $X$ is called a regular CW complex if there exists a regular CW decomposition of $X$. 

Cellular homology is useful for computing the homology groups of CW complexes.  For a regular CW-complex $X$ the  $n$-chains are defined by  
\[
C_n(X) = H_n(X^n,X^{n-1}).
\]
There is a one-to-one correspondence between the basis elements of $C_n(X)$ and the $n$-cells of $X$. For every $n$ dimensional cell $e$ the map $\Phi_e$ generates a monomorphism $(\Phi_e)_* : H_n(B^n,S^{n-1}) \to C_n(X)$. For a fixed generator $a \in H_n(B^n,S^{n-1})$  the set 
\[
\Lambda^n = \{ (\Phi_e)_*(a) : e \in \cE \mbox{ and } \dim(e) = n \}
\]
is a basis  of $C_n(X)$.  
For every $\lambda \in \Lambda= \bigcup_{n\in \Z}\Lambda^n$ we denote its corresponding geometric  cell by  $|\lambda|$.

The boundary  operator  $\partial_n : C_n(X) \to C_{n-1}(X)$ is defined by the following  composition  
\[
H_n(X^n,X^{n-1}) \stackrel{d_{n}}{\rightarrow}  H_{n-1}(X^{n-1}) \stackrel{j_{n-1}}{\rightarrow} H_{n-1}(X^{n-1},X^{n-2})
\]
where $d_n$ is the connecting homomorphism from the long exact sequence of the pair $(X^n , X^{n-1})$ and $j_{n-1}$ is the quotient map. 
In the above chosen  basis of $C_n(X)$, the boundary operator $\partial_n : C_n(X) \to C_{n-1}(X)$ is completely determined by the values  on the basis elements $\Lambda^n$ and can be uniquely expressed as 
$$
\partial_n( \lambda ) = \sum_{ \mu \in \Lambda^{n-1} }[  \lambda :  \mu]   \mu.
$$
The  coefficient $[ \lambda :  \mu ]$ is called an incidence number of the cells.  Regularity of the complex $X$ implies that the incidence number is non-zero only for the pairs of cells   $\lambda \in \Lambda^n, \tau\in\Lambda^{n-1}$  such that 
 $ \bd(|\lambda|) \cap |\tau|  \neq \emptyset$.  Moreover   $[  \lambda : \tau ] = \pm1$ in this case.

The cellular complex corresponding to $X$ is defined by $C(X) = \setof{C_n(X),\partial_n}_{n\in \Z}$.  
The singular homology $H_*(X)$ of $X$ is isomorphic to the cellular homology of $C(X)$, which is given by 
\[
H_*(C(X)) = \frac{\ker(\partial_* (C_*(X)) )}{\im(\partial_{*+1} (C_{*+1}(X)))}.
\]

We close this section with a short overview of persistent homology,  following the presentation given in \cite{HH08survey}.
Let $f : X \to \R$ be a function defined on a CW complex $X$. 
We denote its  sub-level sets by  $X_t = f^{-1}(-\infty, t]$. The function $f$ is called {\em tame} if the homology groups of every sub-level set have finite ranks and there are only finitely many values $t$ across which the homology groups are not isomorphic. 
Let $t_1  < t_2 \ldots < t_m$ be the values for which the homology of $X_{t_{i}- \epsilon}$ and $X_{t_{i}+ \epsilon}$ differ for all $0 < \epsilon <<1$. 
We define an interleaved sequence $\setof{s_i}_{i=-1}^{m+1}$ such that $s_{-1} =  -\infty$,  $s_{m+1}  = \infty$ and $s_{i-1} < t_i < s_i$, for $1\leq i \leq m$. For each $-1 \leq i \leq j \leq m+1$ there is a natural inclusion of the set $X_{s_i}$ into $X_{s_j}$ and we denote the induced homomorphism between the corresponding homology groups by
\[
\bbf^{i,j}_n : H_n(X_{s_i}) \to H_n(X_{s_j}).
\]
Persistent homology makes use of the above mentioned
maps to compare the homology groups of $X_t$ for different values of $t$. 
We say that the homology class  $\alpha \in H_n( X_{s_i})$ is {\em born} at $t_i$ if it does not come from a class in $ H_n(X_{s_{i-1}})$, that is $\alpha \not\in \im ( \bbf^{i-1,i}_n )$. The class $\alpha$ {\em dies} at $t_j$ if $\alpha$ is in  $\im(\bbf^{i,j-1}_n)$ but not in $\im(\bbf^{i,j}_n)$. We denote the birth-death pair for $\alpha$ by $(b_\alpha, d_\alpha)$.   
The $n$-th persistence diagram  $\pd_n(f)$ is defined to be the multi set of the points containing:
\begin{enumerate}
\item one point for each pair  $(b_\alpha, d_\alpha)$.
\item infinitely many copies of the points $(s,s)$ at the diagonal.
\end{enumerate}
If $X$ has dimension $n$, then the persistence diagram for the function $  f : X \to \R$ is the collection $\pd(f) = \setof{\pd_i(f)}_{i = 0 }^n$.  Actually, persistence diagrams can be defined for continuous function $f \colon X \to \R$, where $X$ is the realization of a simplicial complex as a topological space\cite{S-P-Modules}. In this case the persistence diagrams can contain accumulation points at the diagonal. In other words there might be a sequence of off diagonal points converging to the diagonal. Using the same reasoning as in \cite{S-P-Modules}  the same is true if $X$ is  a finite regular CW-complex.

Differences between the persistence diagrams can be assessed using a bottleneck distance defined as follows.

\begin{definition}
\label{defn:bottleneck}
Let $\pd =  \setof{\pd_i}_{i=0}^n$ and $PD' = \setof{\pd'_i}_{i=0}^n$ be persistence diagrams. 
The {\em bottleneck distance} between $\pd$ and $\pd'$ is defined to be 
\[
d_B(\pd,\pd') = \max_{i}{\inf_{\gamma\colon \pd_i \to \pd'_i} \sup_{p\in\pd_i} \| p -\gamma(p)\|_\infty }, 
\]
where $\|(a_0,b_0)-(a_1,b_1)\|_\infty := \max\setof{|a_0-a_1| , |b_0-b_1|}$ and $\gamma$  ranges over all bijections.
\end{definition}

\section{Homology of sub-level sets}

\label{sec::HomologyOnCW}

\begin{figure}[tb]
\subfigure[]{\includegraphics[width = .38\textwidth]{./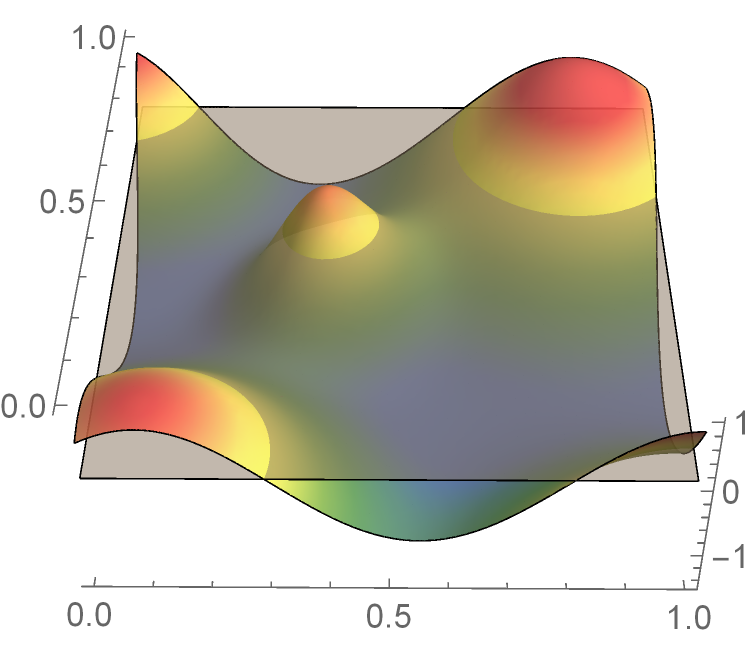}}
\subfigure[]{\includegraphics[width = .295\textwidth]{./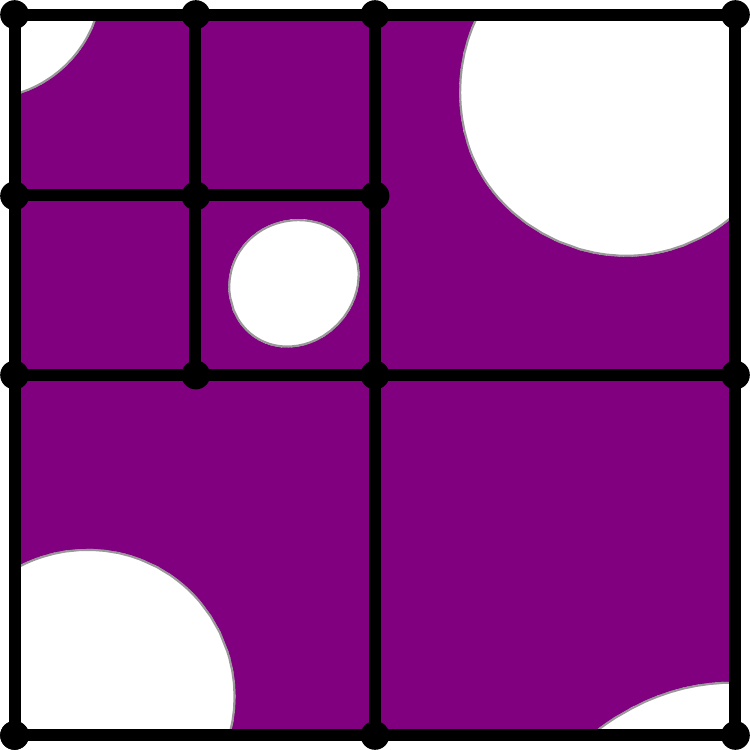}}
\subfigure[]{\includegraphics[width = .3\textwidth]{./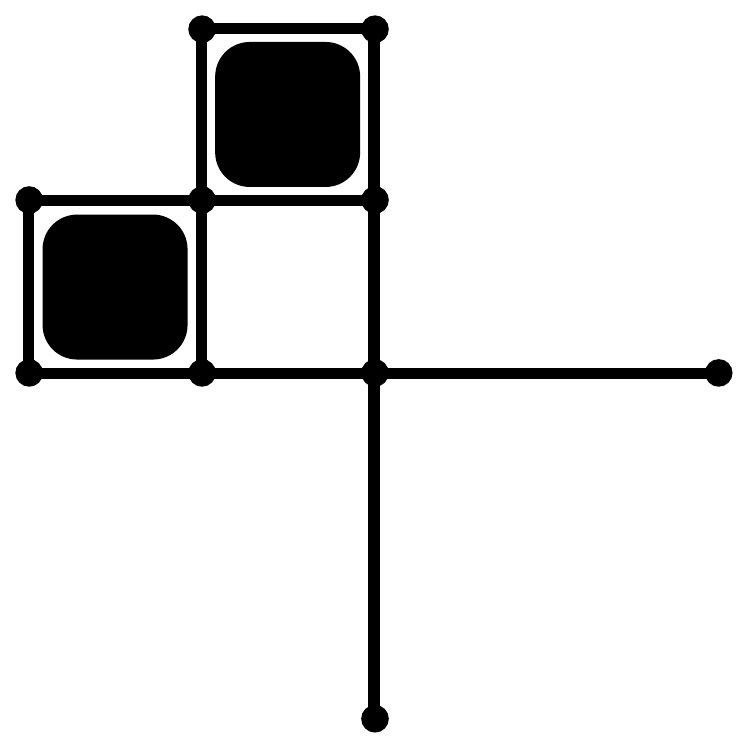}}
\caption{
(a) A simple function $f : [0,1]^2 \to \R$. The sub-level set $X_{0} = f^{-1}((-\infty,0])$ corresponds to the part of  $[0,1]^2$ at which  the value of $f$ is below the semitransparent plane. 
(b) The set $X_{0}$ is shown in purple. The black vertices, edges and squares bounded by the edges define a CW structure $\cE$ on $[0,1]^2$. 
(c) The set $\cX_{0}$ corresponding to the decomposition $\cE$.}
\label{fig::function}
\end{figure}

In this section we present a method for computing the homology of sub-level sets. 
Let $X$ be a CW-complex and   $f : X \to \R$ a continuous function; we are interested in the homology of the sub-level set  $X_t := f^{-1}(-\infty, t]$. Suppose that $\cE$ is a CW structure on $X$, then we define the \emph{cellular approximation} to be 
\begin{equation}
\cX_t : = \bigcup\setof{ e\in \cE : f(\cl(e)) \leq t}.
\end{equation}
Note that $\cX_t$ is a closed subset of $X_t$, and since $\cX_t$ is a finite union of cells, then it is a sub-complex of $X$. 
For the function $f$ shown in Figure~\ref{fig::function} the set $X_{0}$ is homologous to $\cX_{0}$.   
According to Theorem~\ref{prop::IntroCorrect_homology},  the homology groups of $X_t$ and $\cX_t$ are isomorphic if the CW-structure $\cE$ is compatible with $X_t$. We close this section by proving Theorem~\ref{prop::IntroCorrect_homology}.

\begin{proof}

If   $H_*(X_t,\cX_t) \cong 0$  (using singular homology), then the fact that $i_*$ is an isomorphism  follows from  the long exact sequence of the pair. We prove that  $H_*(X_t,\cX_t) \cong 0$ by induction on the dimension of $X$. 

For a zero dimensional complex $X$, the set $ X_t$ is equal to $\cX_t$ and so trivially $H_*(X_t,\cX_t) \cong 0$. Now we show that   $H_*(X_t,\cX_t) \cong 0$  for an  $n$ dimensional complex $X$ under the assumption that the statement is true for any complex with dimension less than $n$. 
Let us first suppose that $X$ contains a single $n$ dimensional cell $e$. 
Then $X = X^{n-1}\cup e$. Define $\cX^{n-1}_t$ to be the $n-1$ skeleton of $\cX_t$.
While $X_t$ is not a CW complex in general, we define an analog of its $n-1$ skeleton by $X_t^{n-1} := X_t \cap X^{n-1}$.  
By the inductive hypothesis, the homology groups $H_*(X_t^{n-1},\cX^{n-1}_t) $ are trivial. 

The CW structure $\cE$ is compatible with $X_t$,  so the cell $e$  satisfies one of the conditions in Definition~\ref{def::CompatibleCW}.
If $f(\cl(e)) > t$, then $X_t  = X^{n-1}_t $ and $\cX_t = \cX^{n-1}_t $. 
Hence we obtain the set equality $H_*(X_t,\cX_t)  =H_*(X_t^{n-1},\cX^{n-1}_t) $ and so by the inductive assumption the relative homology groups stay trivial.

Now suppose that $f(\cl(e)) \leq t$. 
In this case both $\cl(e) \subseteq X_t$ and $\cl(e) \subseteq \cX_t$. 
In general, we have the equality $(X_t \backslash e , \cX_t \backslash e) = ( X_t^{n-1}, \cX_t^{n-1})$, so it follows from the inductive assumption that $ H_*(X_t \backslash e , \cX_t \backslash e) \cong 0$.
We would like to apply the excision theorem to obtain $H_* (X_t , \cX_t ) \cong H_* (X_t \backslash e , \cX_t \backslash e)  \cong 0$.  
However, the assumptions of the singular excision theorem are not actually satisfied because $\cl(e)$ is not in the interior of $\cX_t $. This problem can be overcome by expanding $X_t$ and $\cX_t$  inside $e$ using a small $\varepsilon$ neighborhood of $\bd(e)$. Note that for $\varepsilon$ small enough the homology groups of $X_t$ and $\cX_t$ do not change.

If $e$ satisfies  Condition (3) in  Definition~\ref{def::CompatibleCW}, then there exists a deformation retraction  $h(\bx,s)$ of the set $X_t \cap \cl(e)$ onto $X_t \cap \bd(e)$. 
Note that $h$ is a continuous map on the closed set $X_t \cap \cl(e) \times [0,1]$, and the identity map on the closed set $X_t \backslash e  \times [0,1]$ is also continuous. 
Since these two maps agree on the intersection of their domains, we can extend $h$ to a deformation retraction of $X_t $ onto  $X^{n-1}_t$. 
Existence of $h$ also implies that there exists some $t' > t$ such that $t' \in f(\cl(e))$, hence $e \cap \cX_t = \emptyset$ and $\cX_t = \cX_t^{n-1}$. By induction   $H_*(X_t,\cX_t) \cong 0$ again.

Finally suppose that  $X$ is an arbitrary  $n$ dimensional complex. Let  $e_1, \ldots, e_m$  be its $n$ dimensional cells. We repeat the above process $m$ times. At each step we  add a cell $e_k$ to $X^n \cup e_1, \cup \ldots \cup e_{k-1}.$ 

\end{proof}

\section{Persistent homology}
\label{sec::PD}

In order to exactly compute the persistence diagrams $\pd(f)$ of a continuous bounded function, one must identify all thresholds $t_i$ for which the homology of $X_{t_{i}- \epsilon}$ and $X_{t_{i}+ \epsilon}$ differ for all $0 < \epsilon <<1$. If $f$ is a Morse function, then the thresholds $t_i$ are the critical values of $f$, which one may identify after locating every point where the Jacobian $Df$ vanishes. 
Such a root finding problem is frequently studied in nonlinear optimization \cite{hansen2003global}. 
In applications though, it may not always be feasible to rigorously find every root of $Df$.  
In such a situation, it would be desirable to still be able to compute the persistence diagrams of $f$, even at the cost of losing some accuracy.

In this section we present a  framework  for computing an approximation  of the persistence diagram $\pd(f)$.
For a given $\varepsilon > 0$ we  construct a cellular approximation of the filtration corresponding to $f$. The persistence diagram $\pd'$ of this approximation is $\varepsilon$ close to $\pd(f)$, that is $d_B(\pd(f),\pd') < \varepsilon$. We start by choosing an approximate filtration (cf. Definition \ref{def::ApproximateFiltration}). 
The following lemma shows that the bottleneck distance between the persistent diagram for the $\varepsilon$  approximate filtration of $f$ and the persistence diagram of $f$ is at most $\varepsilon$.

\begin{lemma}
\label{prop::closeApproximation}
Let $X$ be a CW complex and $f : X \to \R$ a continuous bounded function. If $\pd'$ is a persistence diagram of any  $\varepsilon$ approximate filtration of $f$ then  $d_B(\pd(f),\pd')  < \varepsilon$.
\end{lemma}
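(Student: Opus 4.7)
The plan is to realize the $\varepsilon$ approximate filtration $\setof{X_{s_i}}$ as the sub-level set filtration of an auxiliary function $\tilde f : X \to \R$ that uniformly $\varepsilon$-approximates $f$, and then to invoke the stability theorem for persistence diagrams \cite{MR2279866}.

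First I would define $\tilde f(x) := \min \setof{s_i : 0 \leq i \leq m,\ f(x) \leq s_i}$. Conditions (1) and (2) of Definition~\ref{def::ApproximateFiltration} ensure this minimum exists and lies in $\setof{s_1,\ldots,s_m}$ for every $x \in X$ (no $x$ satisfies $f(x)\leq s_0$ since $s_0<\min f$). If $\tilde f(x) = s_j$, minimality gives $s_{j-1} < f(x) \leq s_j$, and condition (3) yields
\[
0 \leq \tilde f(x) - f(x) < s_j - s_{j-1} < \varepsilon,
\]
so $\|\tilde f - f\|_\infty < \varepsilon$. Setting $i(t) := \max\setof{i : s_i \leq t}$, a direct check shows $\tilde f^{-1}(-\infty,t] = X_{s_{i(t)}}$, so the sub-level set filtration of $\tilde f$ is the piecewise-constant extension to $\R$ of $\setof{X_{s_i}}_{i=-1}^{m+1}$ and consequently has persistence diagram $\pd'$.

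The stability theorem applied to the pair $(f,\tilde f)$ then yields
\[
d_B(\pd(f),\pd') \;=\; d_B(\pd(f),\pd(\tilde f)) \;\leq\; \|\tilde f - f\|_\infty \;<\; \varepsilon,
\]
as required.

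The main technical obstacle is that $\tilde f$ is only upper semi-continuous, while the classical form of \cite{MR2279866} is phrased for tame continuous functions. I would handle this either by perturbing $\tilde f$ to a continuous tame $\tilde f_\delta$ with the same persistence diagram $\pd'$ and still satisfying $\|\tilde f_\delta - f\|_\infty < \varepsilon$ for sufficiently small $\delta$ (smoothing across each level set $\setof{\tilde f = s_i}$ in a thin collar), or equivalently by appealing to the algebraic version of stability: using the spacing bound $s_{i(t)+1}-s_{i(t)}<\varepsilon$, the inclusions
\[
X_{t-\varepsilon} \;\subseteq\; X_{s_{i(t)}} \;\subseteq\; X_t \;\subseteq\; X_{s_{i(t+\varepsilon)}}
\]
exhibit a strict $\varepsilon$-interleaving between the persistence modules $\setof{H_*(X_t)}_{t\in\R}$ and $\setof{H_*(X_{s_{i(t)}})}_{t\in\R}$, which bounds the bottleneck distance by a quantity strictly less than $\varepsilon$.
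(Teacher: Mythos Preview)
Your approach is correct and takes a genuinely different route from the paper. The paper's proof attempts to build a matching $\gamma\colon\pd_n(f)\to\pd'_n$ by hand: points $(b,d)$ with $d-b<\varepsilon$ are sent to the diagonal, while points with $d-b\geq\varepsilon$ are matched to $(s_i,s_j)$ where $s_{i-1}<b\leq s_i$ and $s_{j-1}<d\leq s_j$, appealing informally to the idea that each long-lived feature of $f$ ``corresponds to'' a unique feature of the coarsened filtration. Your argument instead packages the approximate filtration as the sub-level filtration of a step function $\tilde f$ with $\|f-\tilde f\|_\infty<\varepsilon$ and then invokes stability. This is cleaner and more rigorous: the paper's direct matching leaves the existence and uniqueness of the ``corresponding feature $q$'' unjustified (this is precisely what the structure/stability theorem supplies), and its final bijection claim via a cardinality comparison is imprecise for multisets with infinitely many diagonal points. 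The cost of your route is the discontinuity of $\tilde f$, which you correctly flag; of your two proposed fixes, the interleaving argument is the robust one---setting $\varepsilon':=\max_{1\leq i\leq m}(s_i-s_{i-1})<\varepsilon$, your chain of inclusions gives an $\varepsilon'$-interleaving of q-tame modules, so algebraic stability yields $d_B\leq\varepsilon'<\varepsilon$. The smoothing route can also be made to work but needs more care to preserve $\pd'$ exactly. In short, the paper's proof is more elementary in spirit but sketchier; yours trades a small amount of external machinery (already alluded to in the Introduction via \cite{MR2279866} and \cite{S-P-Modules}) for a tight argument.
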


\begin{proof}
To prove the lemma we have to show that for every $n$-th persistence diagram $\pd_n(f)$ there exists a  bijection $\gamma$ from $\pd_n(f)$ to $\pd'_n$ such that $\| p - \gamma(p)\|_\infty < \varepsilon$ for every $p \in \pd(f)$.

If $p=(b,d)$ and $d - b < \varepsilon$, then we define $\gamma(p) = (b,b)$. Clearly  $\| p - \gamma(p)\|_\infty < \varepsilon$ in this case.
The collection of sets $\{ X_{s_i} \}_{i=-1}^{m+1}$ is an $\epsilon$ filtration. Therefore, if $d - b \geq \varepsilon$, then there exist indices $i$ and $j$  such that  $s_{i-1} < b \leq s_i \leq s_{j-1} < d \leq s_j$.
So the topological feature corresponding to $p = (b,d)$ is not present in the sub-level set $X_{s_{i - 1}}$ but it can be found in $X_{s_i}$. Hence for the $\varepsilon$ approximate filtration there exists  a unique topological  feature $q$, corresponding to $p$,  born at $s_i$. By the same reasoning the feature $q$ dies at $s_{j}$ and we define $\gamma(p) = (s_{i},s_{j})$. Again the distance $\| p - \gamma(p)\|_\infty < \varepsilon$, and the number of points in $\pd'$ is smaller or equal to the number of points in $\pd(f)$. So $\gamma$ is a bijection.  
\end{proof}

It is appealing to construct a  cellular  approximate filtration by  replacing  the sets $X_{s_i}$ with their cellular surrogates $\cX_{s_i}$ introduced in the previous section. 
However the collection $\setof{\cX_{s_i}}_{i = -1}^{m+1}$ might not be a filtration, as  shown in Figure~\ref{fig::NotAFiltration}. Namely, the set $\cX_0$ is not a subset of $\cX_1$ because of  different choices of the compatible CW structures  for the sub-level sets.
To avoid this problem we construct a common refinement of the CW structures. Our  refinement  process  relies on the fact that  the CW structures $\cE_{s_i}$ are commensurable (cf. Definition \ref{def::commensurable}).

\begin{definition}
Let $\setof{\cE_{s_i}}_{i = -1}^{m+1}$ be a  commensurable collection of CW structures on $X$. Then the  set 
\[
\cE := \setof{ e \in \cE' :  \text{ either } e\cap e' = \emptyset \text{ or } e\subseteq e' \text{ for every } e' \in \cE' }
\]
is called the  \emph{common refinement} of the CW structures  $\setof{\cE_{s_i}}_{i = -1}^{m+1}$.
\
\label{def::refinement}
\end{definition}

The CW structures $\cE_{0}$ and $\cE_{1}$ shown in Figures~\ref{fig::NotAFiltration} (b-c) are commensurable and their common refinement is $\cE_{0}$. In this paper we will use the common refinement of all the structures $\setof{\cE_{s_i}}_{i = -1}^{m+1}$ to compute the persistent homology. 
If zig-zag persistence \cite{Carlsson:2009:ZPH:1542362.1542408} is used, then it suffices to consider  the common refinements of consecutive CW structures $\cE_{s_i}$ and $\cE_{s_{i+1}}$.
We stress that the common refinement does not need to be compatible with any  of the sub-level sets $X_{s_i}$. Hence we need to  develop a more sophisticated approach for building  a cellular representation of the approximate filtration. 
Before  introducing a cellular representation of the filtration $\setof{X_{s_i}}_{i =-1}^{m+1}$  let us prove that the common refinement $\cE$ is a  regular  CW structure on $X$.

\begin{lemma}
Let $\cE$ be  a common refinement of the commensurable CW structures  $\setof{\cE_{s_i}}_{i =-1}^{m+1}$ on $X$. Then $\cE$ is a CW structure on $X$.
\label{lem:ninimalCW}
\end{lemma}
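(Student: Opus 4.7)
My plan is to verify directly that $\cE$ meets the three defining properties of a finite regular CW structure on $X$: a finite partition by cells, each cell homeomorphic to a Euclidean space, and for every $n$-cell $e$ a regular characteristic map $\Phi_e$ with $\Phi_e(S^{n-1}) \subseteq X^{n-1}_\cE$. Finiteness is immediate from $\cE \subseteq \cE' = \bigcup_{i=-1}^{m+1}\cE_{s_i}$, and because every $e \in \cE$ is a cell of some $\cE_{s_i}$ it automatically inherits a homeomorphism $e \cong \R^{\dim(e)}$.

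The key step is the partition property. For covering, I would fix $x \in X$ and let $e_i \in \cE_{s_i}$ denote the unique cell containing $x$ (unique because $\cE_{s_i}$ is a partition). Commensurability, applied pairwise, forces the finite collection $\{e_i\}$ to be totally ordered by inclusion, so it has a minimal element $e$. To verify $e \in \cE$, take any $e'' \in \cE'$ with $e \cap e'' \neq \emptyset$, say $e'' \in \cE_{s_j}$, and pick $y \in e \cap e''$; minimality gives $e \subseteq e_j$, so $y \in e_j \cap e''$, and disjointness of distinct cells within the single partition $\cE_{s_j}$ forces $e'' = e_j$, whence $e \subseteq e''$. For disjointness, if two distinct cells $e,e' \in \cE$ met, applying the defining property of $\cE$ to each would give $e \subseteq e'$ and $e' \subseteq e$, hence $e = e'$.

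For the characteristic maps, every $e \in \cE$ already comes equipped with a regular characteristic map $\Phi_e \colon (B^n,S^{n-1}) \to (\cl(e),\bd(e))$ from the structure $\cE_{s_i}$ in which it originally lives; the only non-automatic point is that $\bd(e)$ lies in $X^{n-1}_\cE$. Given $y \in \bd(e)$, regularity of $\cE_{s_i}$ places $y$ in some cell $e' \in \cE_{s_i}$ with $\dim(e') < n$; rerunning the covering construction at $y$ yields $\tilde e \in \cE$ with $y \in \tilde e$, and by the argument above $\tilde e \subseteq e'$. The main obstacle is the resulting dimension bound $\dim(\tilde e) \leq \dim(e')$, which is exactly the content of invariance of domain: a continuous injection $\R^{\dim(\tilde e)} \cong \tilde e \hookrightarrow e' \cong \R^{\dim(e')}$ cannot exist if $\dim(\tilde e) > \dim(e')$. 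Once this dimension bound is in place the verification is complete.
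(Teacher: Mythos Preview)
Your proof is correct and follows essentially the same route as the paper's: both establish the partition property by taking, for each $x\in X$, the minimal cell among those in $\cE'$ containing $x$, and both borrow the characteristic maps from the original structures $\cE_{s_i}$. Your treatment is in fact slightly more careful than the paper's in two places: you spell out the argument that the minimal cell actually lies in $\cE$ (the paper asserts the equivalent fact ``$e\cap e_x=\emptyset$ if $e\notin C(x)$'' without justification), and you explicitly invoke invariance of domain for the dimension bound $\dim(\tilde e)\leq\dim(e')$, which the paper leaves implicit.
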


\begin{proof}
First we show that every point $x \in X$ is contained in a unique cell $e \in \cE$. Note that the set $C(x) = \setof{ e \in \cE'  : x \in e}$ is not empty. 
If $e,e'\in C(x)$, then by  Definition~\ref{def::commensurable}  either $e\subseteq e'$ or $e'\subseteq e$. This implies that the cell  $e_x := \bigcap\setof{ e \in C(x)} $ is an element of $\cE'$. Moreover for any $e\in C(x)$ the cell $e_x \subseteq e$ and $ e \cap e_x = \emptyset$ if $e \not\in C(x)$.  Therefore  $e_x \in \cE$. Suppose by contradiction that $e_x$ is not a unique cell in $\cE$ containing $x$. Then there exist $e,e' \in \cE$ such that  $x\in e \cap e'$. It follows from Definition~\ref{def::refinement} that $e\subseteq e'$ and $e'\subseteq e$. So $e = e'$.

We showed that $\cE$ is a disjoint cellular decomposition of $X$. Naturally, every cell $e \in \cE \subseteq \cE'$ is homeomorphic to some euclidean space $\R^{\dim (e)}$.  Now we need to define the characteristic maps for the cells. Every cell in $e \in \cE$ belongs to at least one   $\cE_{s_i}$. We identify the characteristic map for $e \in \cE$ with the characteristic map given by one (arbitrary but fixed) CW structure $\cE_{s_i}$.  Since every cell in $\cE '$ is covered by a collection of smaller cells in $\cE$,  the boundary of a cell $ e \in \cE$ is contained in the union of cells in $\cE$ of dimension less than $\dim (e)$. Hence the set of cells  $\cE$  equipped with these maps is a CW structure on $X$.
\end{proof}

\begin{figure}[tb]
\subfigure[]{\includegraphics[width = .24\textwidth]{./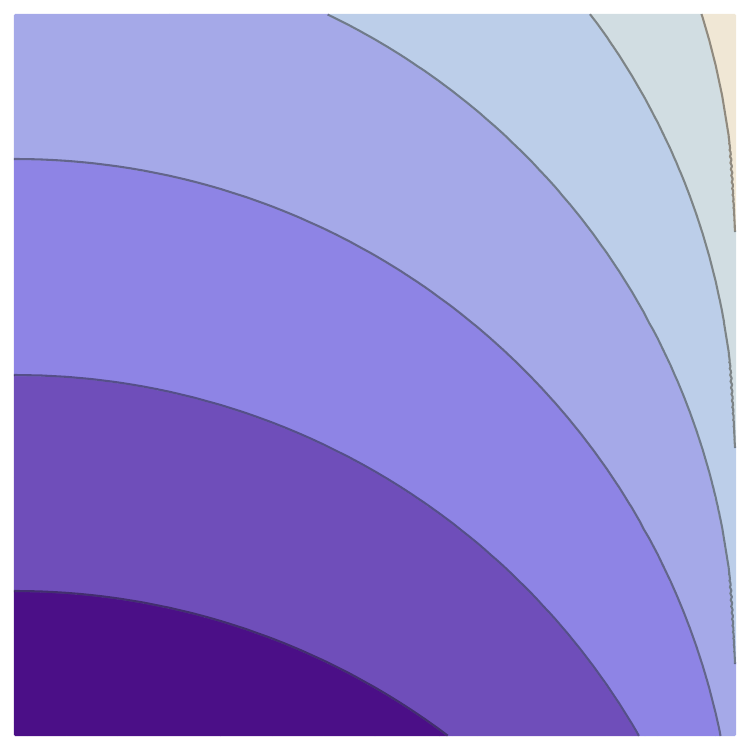}}
\subfigure[]{\includegraphics[width = .24\textwidth]{./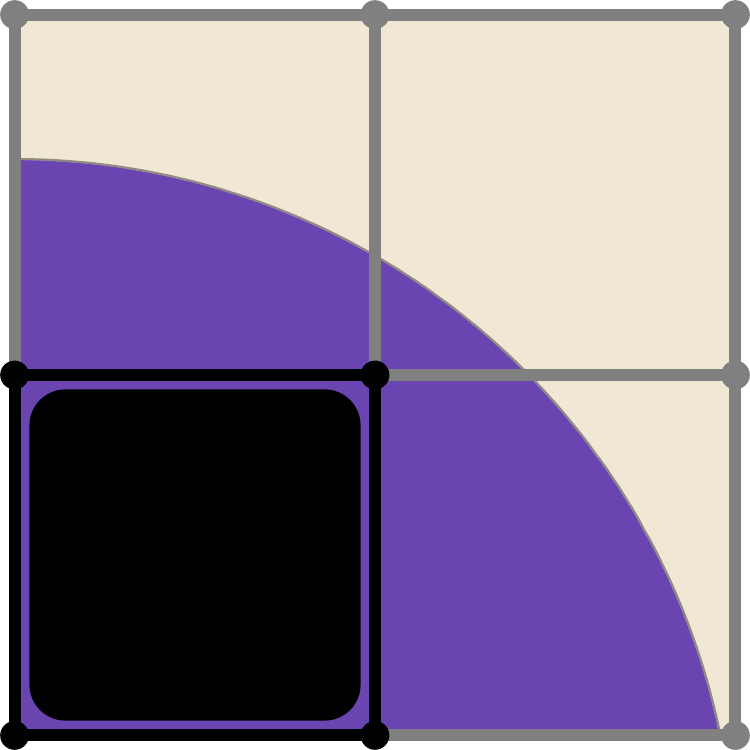}}
\subfigure[]{\includegraphics[width = .24\textwidth]{./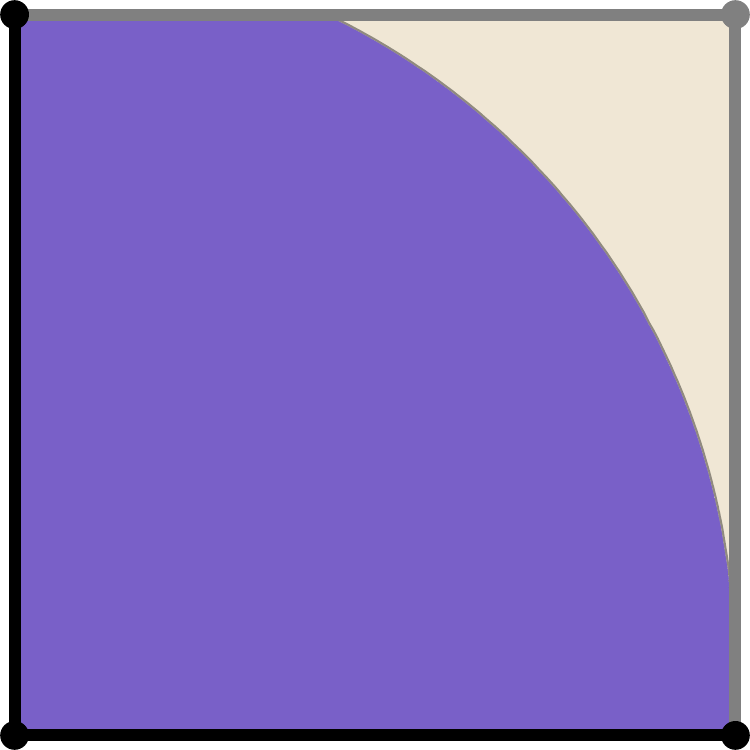}}
\subfigure[]{\includegraphics[width = .24\textwidth]{./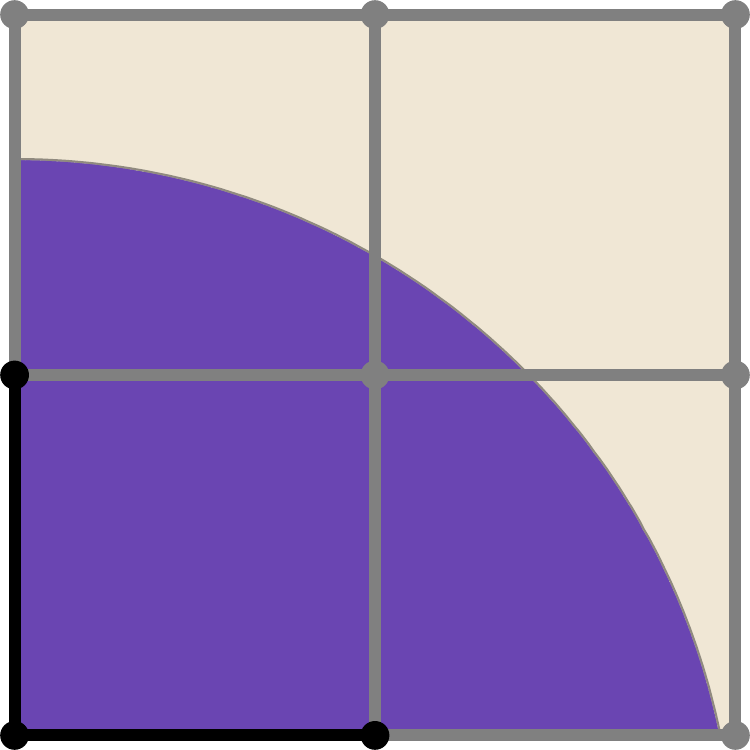}}
\caption{(a) A contour plot of  a  function $f : [0,1]^2 \to \R$.  (b) The sub-level set  $X_{0}$ and its cellular approximation  $\cX_{0}$  obtained by the compatible CW structure with four squares ($2$ dimensional cells.) (c) The sub-level set   $X_{1}$ and its cellular approximation  $\cX_{1}$  obtained by the compatible CW structure  with a single $2$ dimensional cell. (d)  The sub-level set $X_0$ with the refined approximation $\bar{\cX}_{0}$, which is equal to $ \cX_{0} \cap \cX_{1}$.  }
\label{fig::NotAFiltration}
\end{figure}

Since the common refinement $\cE$ does not have to be compatible with any of the sub-level sets, we cannot  use it to generate a cellular approximation of the filtration $\setof{X_{s_i}}_{i=-1}^{m+1}$.  
Instead we approximate the sub-level sets $X_{s_i}$ by 
\begin{equation}
\label{eqn::filtarion}
\bar{\cX}_{s_i} = \bigcap_{j \geq i } \cX_{s_j}. 
\end{equation}
\begin{definition}
Let $\setof{X_{s_i}}_{i = -1}^{m+1}$ be an $\varepsilon$  approximate filtration of a continuous function $f$ and $\setof{\cE_{s_i}}_{i = -1}^{m+1}$ a collection of commensurable CW structures, where  $\cE_{s_i}$ is compatible with $X_{s_i}$ for every $s_i$.  Then the collection of sets $\setof{\bar{\cX}_{s_i}}_{i = -1}^{m+1}$ is called an $\varepsilon$ \emph{approximate cellular filtration} of $f$ corresponding to $\setof{\cE_{s_i}}_{i = -1}^{m+1}$.
\end{definition}

\begin{remark}
\label{rem::Inclusions}
Every set $\bar{\cX}_{s_i}$ is a sub-complex of $X$, where the CW structure of $X$ is given by the common refinement $\cE$. 
Moreover   $\setof{\bar{\cX}_{s_i}}_{i = -1}^{m+1}$ is a filtration, that is
\[
\emptyset = \bar{\cX}_{s_{-1}} \subseteq \bar{\cX}_{s_0} \subseteq \dots \subseteq \bar{\cX}_{s_{m+1}} = X.
\]
\end{remark}

Our goal is to show that the persistence diagrams of the filtrations $\setof{{X}_{s_i}}_{i = -1}^{m+1}$  and $\setof{\bar{\cX}_{s_i}}_{i = -1}^{m+1}$ are equal. This is true  if  the diagram shown below in Figure \ref{fig::CommutativeDiagram}, with the maps  induced by inclusion,  commutes and the maps $j^i_*$ are isomorphisms.  It follows from Remark~\ref{rem::Inclusions} and the functorality of homology that the diagram commutes. To prove that $j^i_*$ are isomorphisms we will have to strengthen the definition of a compatible CW structure. 

\begin{figure}[H]
\[
\begin{CD}
\dots 	@>>> H_* ( X_{s_{i-1}} ) 					@>>>  	H_* ( X_{s_i} ) 				@>>>  	H_* ( X_{s_{i+1}})  				@>>> \dots \\
@.					@AA j^{i-1}_* A													@AA j^{i}_* A    									@AA j^{i+1}_*A @. \\
\dots 	@>>> H_* ( \bar{\cX}_{s_{i-1}} ) 	@>>> 		H_* ( \bar{\cX}_{s_i} ) 	@>>>  H_* ( \bar{\cX}_{s_{i+1}})  @>>> \dots 
\end{CD}
\]
\caption{The persistence diagrams of the filtrations $\setof{X_{s_i}}_{i = -1}^{m+1}$ and  $\setof{\bar{\cX}_{s_i}}_{i = -1}^{m+1}$  are equal if the above diagram commutes and the maps $j_*^i$, induced by inclusion, are isomorphisms. }
\label{fig::CommutativeDiagram}
\end{figure}

First, we illustrate the idea on a simple example shown in Figure~\ref{fig::NotAFiltration}.
The cells $e \subset \cX_{0}$ that are not in $\bar{\cX}_{0}$ are inside of the single 2 dimensional cell $e'  : = (0,1)^2 \in \cE_{1}$ and additionally $\bar{\cX}_{0} = \cX_{0} \cap \bd(e')$. 
Furthermore $e' \cap \cX_{1} = \emptyset$ because the image $f(\cl(e'))$ contains both values above and below $1$. The fact that the CW structure $\cE_1$ is compatible with $X_1$ implies that there is a deformation retraction $h (\bx, s)$ from $X_1 \cap \cl(e')$ onto $X_1 \cap \bd(e')$. If $f \circ h(\bx,s)$ is non-increasing in $s$, then it can be restricted to a deformation retraction from $X_0 \cap \cl(e')$ onto $X_0 \cap \bd(e')$. By the same argument as in the proof of Theorem~\ref{prop::IntroCorrect_homology}  the inclusion map from $\bar{\cX}_{0} = \cX_0 \cap \bd(e')$ to $X_0$ induces an isomorphism on the homology level.  
We now prove Theorem \ref{prop::ApproximateFiltration},  which treats the general case.

\begin{proof}
The persistence diagrams of the filtrations $\setof{{X}_{s_i}}_{i = -1}^{m+1}$  and $\setof{\bar{\cX}_{s_i}}_{i = -1}^{m+1}$ are equal if the diagram in Figure~\ref{fig::CommutativeDiagram} commutes and the maps $j_*$, induced by inclusion, are isomorphisms. We already showed that the diagram computes. Now we have to prove that every $j^i_* \colon H_*(\bar{\cX}_{s_i}) \to H_*(X_{s_i})$ is an isomorphism. According to  Theorem~\ref{prop::IntroCorrect_homology}, the inclusion of each $\cX_{s_i}$ into $X_{s_i}$ induces an isomorphism.  Hence it suffices to show that the inclusion from $\bar{\cX}_{s_i}$ into $\cX_{s_i}$ also induces an isomorphism in homology.

The set $\bar{\cX}_{s_i}$ is a sub-complex of $\cX_{s_i}$. So
\[\bar{\cX}_{s_i} = \cX_{s_i} \setminus \setof{e_1 \cup e_2 \cup \ldots \cup e_N},
\]
for some cells $e_a \in \cE$. We denote $E =  \setof{e_1, e_2, \ldots, e_N}$. First we divide the  cells in $E$ into disjoint groups. Then we show that by removing all of the cells in the first group we obtain a CW sub-complex of $\cX_{s_i}$ and the map induced by the  inclusion of this complex into  $\cX_{s_i}$ is an isomorphism.  We prove that the maps  $\bar{j}^i_* \colon H_*(\bar{\cX}_{s_i}) \to H_*(\cX_{s_i})$  are isomorphisms by inductively removing the cells in all the other groups.

Before grouping the cells into disjoint classes let us show that for every cell $e_a \in E$ there exists $e'_a \in \cE_{s_i}$ and $e''_a \in \cE_{s_j}$, where $j > i$, such that: 
\begin{enumerate}
\item $e_a \subseteq e'_a \subsetneq e''_a$,
\item $e''_a \cap \cX_{s_j} = \emptyset$.
\end{enumerate} 
By the construction of $\cE$, each cell $e_a$ is a subset of a unique cell $e'_a \in \cE_{s_i}$ and $e'_a \subseteq \cX_{s_i}$. 
The fact that $e_a \cap \bar{\cX}_{s_i} = \emptyset$ implies  $e_a \cap {\cX}_{s_j} = \emptyset$ for some $j > i.$ It follows from the construction of the cellular approximation $\cX_{s_j}$ that  $e'_a \not\in \cE_{s_j}$. Otherwise we would have $e_a \subseteq e'_a \subseteq \cX_{s_j}$. Therefore there exists a cell $e''_a \in \cE_{s_j}$ which satisfies  the  properties (1) and (2).

Let $\setof{ e''_1, \ldots, e''_N}$ be the cells with the properties (1) and (2)   corresponding to the cells in $E$ and let $E '' = \setof{ e''_1, \ldots, e''_M} \subseteq \setof{ e''_1, \ldots, e''_N}$ be a set of the maximal cells. That is, a cell $e''_i$ is removed from the set $\setof{ e''_1, \ldots, e''_N}$  if there is another cell $e''_j$ such that $e''_i \subseteq e''_j$. We order the cells in $E''$ by dimension so that $\dim(e''_i) \geq \dim( e''_j)$ for $i \leq j$. We say that the cells  $e_i , e_j \in E$ are in the same group if there exists some $l$ such that both $e_i$ and $e_j$ are subsets of $e''_l$.

We will remove the cells   $e \in E$ from $\cX_{s_i}$ in $M$ steps. 
During the $l$-th step all of the cells $e \in E$   such that $ e \subset e''_l$ are removed. Note that   $\bar{\cX}_{s_i} = \cX_{s_i} \setminus  \cup_{a = 1}^M e_a''$.  Due to the ordering of the cells in $E''$ we get a CW-complex at each step. 
To show this let  $e \in E$  be contained in some  $e''_l \in E''$. We show that $e$ is not in the boundary of any cell in $\cX_{s_i} \backslash \cup_{a=1}^{\ell} e_a''$. The cell $e''_l \in \cE_{s_j}$ for some $j > i$ and $e''_l \not\in \cX_{s_j}$.  It follows from the construction of $\cX_{s_j}$ that every cell $c \in \cE_{s_j}$ such that $e''_l \cap \cl(c) \neq \emptyset$ is not in $\cX_{s_j}$. So every cell in  $\cX_{s_i}$ 
 whose closure intersects the cell $e$ is contained in some cell  $e''_k \in E''$ such that  $\dim(e''_k) > \dim(e''_l)$  and was removed before $e$. Therefore at each step we get a closed subset of the cells from  $\cX_{s_i}$ which is a CW-complex. 
 Moreover, at each step we can express the complex as a finite union of cells from $\cE_{s_i}$, so it also has a CW structure induced by $\cE_{s_i}$.

First we suppose that $l = 1$ and prove that the map  $\ell_* \colon  H_*(\cX_{s_i} \setminus  e_1'') \to H_*(\cX_{s_i})$, induced by  inclusion, is an isomorphism. To do so we apply  the five lemma to the following diagram.

\begin{figure}[H]
\[
\begin{CD}
@>>> H_* (  \cX_{s_i} \cap \bd(e_1'')) 					@>>>    H_* ( \cX_{s_i} \setminus e_1''   ) \oplus H_* ( \cX_{s_i} \cap \cl(e_1'')  ) 				@>>>  H_* ( \cX_{s_i}  )  @>>> \\
@. 						@AA  A						@AA  A    								@AA \ell_* A @. \\
@>>> H_* ( \cX_{s_i} \cap \bd(e_1'') ) 		@>>>   H_* ( \cX_{s_i} \setminus e_1''  ) \oplus H_* ( \cX_{s_i} \cap \bd(e_1'')  ) 	@>>>  H_* ( \cX_{s_i} \setminus e_1'' )  @>>>\
\end{CD}
\]
\end{figure}



Horizontal maps  are given by the cellular Mayer-Vietoris sequences for the CW complexes given by the middle terms. 
The vertical maps are induced by inclusion.  
 The first two vertical maps are induced by identity maps and are thereby isomorphisms. 
To satisfy the assumptions of the five lemma we have to show that the map  $k_* \colon H_* ( \cX_{s_i} \cap \bd(e_1'')) \to  H_* ( \cX_{s_i} \cap \cl(e_1''))$  is also an isomorphism. 

We recall that the cell $e_1'' \in \cE_{s_j}$ and $e_1'' \cap \cX_{s_j} = \emptyset$ for some $j > i$. Therefore the image $f(e_1)$ contains points both above and below $s_j$ and there has to exist a deformation retraction $h(\bx,s) \colon  X_{s_j} \cap \cl( e_1'') \to X_{s_j} \cap \bd(e_1'')$. 
We have assumed that $f \circ h(\bx,s)$ is non-increasing in $s$. 
So $h$ restricts to a deformation retraction from $X_{s_i} \cap \cl( e_1'')$ onto $X_{s_i} \cap \bd(e_1'')$.  
Hence the top horizontal map in the diagram below is an isomorphism. 
\begin{figure}[H]
\[
\begin{CD}
H_* (X_{s_i} \cap \bd(e_1'') ) 					@> \cong >>  H_* ( X_{s_i} \cap \cl(e_1'') ) 			\\
@AA \cong A											@AA \cong A    						\\
H_* ( \cX_{s_i} \cap \bd(e_1'') ) 	@>k_*>> H_* ( \cX_{s_i} \cap \cl(e_1'') ) 	
\end{CD}
\]
\end{figure}
There exist regular CW decompositions of $\bd(e_1'')$ and $\cl(e_1'')$ comprised of cells from $ \cE_{s_i}$. It follows from Theorem  \ref{prop::IntroCorrect_homology}  that the vertical arrows in the diagram above are isomorphisms. By the commutativity of the diagram, it follows that the map $k_*$ is an isomorphism.
So by the five lemma, it follows that the map $\ell_* : H_* ( \cX_{s_i} \setminus e_1'' ) \to  H_* ( \cX_{s_i} \setminus e_1'' )$ is an isomorphism. 

Using mathematical induction we suppose that the inclusion map from $\cX_{s_i} \setminus  \cup_{a = 1}^l e_a''  $ to $\cX_{s_i}$ induces an isomorphism on the homology level. To show that this is also true for $l+1$ we repeat the above process replacing the set $\cX_{s_i}$ by $\cX_{s_i} \setminus  \cup_{a = 1}^l e_a'' $ and $e_1''$ by $e_{l+1}''$. 
\end{proof}

\section{ Cellular complexes on grid}
\label{sec:grid}

In order to build a cellular approximation  $\cX_t$ of $X_t$ we need to construct commensurable CW structures on the space $X$. We do this  in an adaptive manner, starting with a coarse CW structure $\cE$ on $X$.   To obtain a cellular approximation which is compatible with a given sub-level set,  the cells that do not satisfy any of the conditions in Definition~\ref{def::CompatibleCW} are repeatedly refined. 
If the critical values of $f$ are bounded away from $t$, then this process terminates in a finite number of steps.

To formalize this process we introduce the notion of a self similar grid. 
Using this type of grid also facilitates a memory efficient method for storing the complex  and its boundary operator, see Section~\ref{sec:DataAndAlgorithms}.  
At the end of this section we show how to compute the boundary operator for the refined  CW structure on $X$ in terms of the incidence numbers for the basis $\Lambda$ given by  $\cE$.

\begin{figure}[]
\subfigure{\includegraphics[height = .315\textwidth]{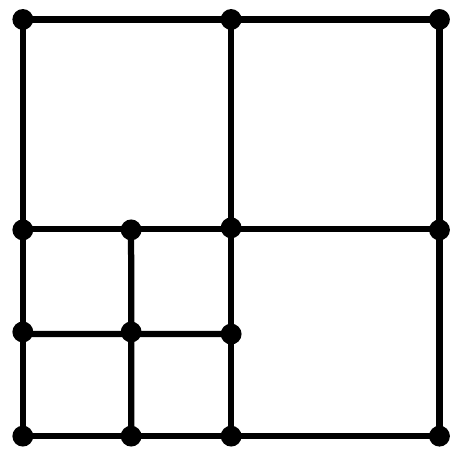}}
\subfigure{\includegraphics[height = .32\textwidth]{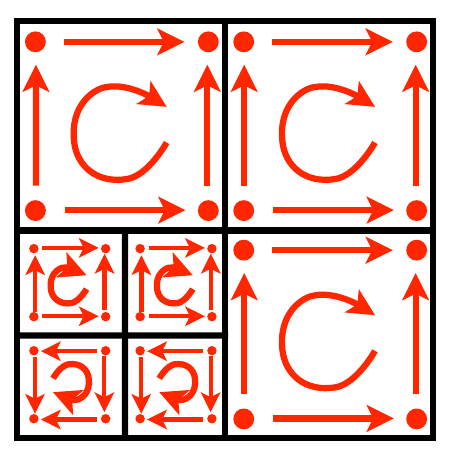}}
\subfigure{\includegraphics[height = .32\textwidth]{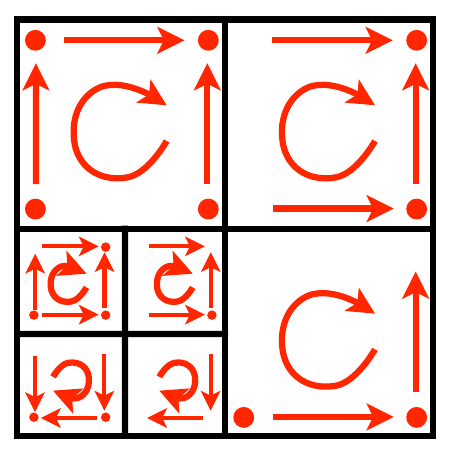}}
\caption{(a) A  dyadic   grid $\cG$ on a unit square. (b)  CW structures  on the  dyadic cubes.  (c) CW structure $\cE_\cG$ induced by $\cG$. }
\label{fig:DyadicGrid}
\end{figure}

Before defining a  self similar grid in full generality we consider the case of a unit cube $[0,1]^N$. 
We choose $\cE$ to be the standard CW structure on the cube consisting of the vertices, edges, higher dimensional faces and a single N-dimensional cell. 
A self similar grid for the unit cube is built out of dyadic cubes defined as follows.

\begin{definition}
Let $n$ be an arbitrary natural number. For every sequence  $\setof{m_i}_{i=1}^N$, where $0 \leq m_i < 2^n$,  we define an  \emph{$N$ dimensional dyadic cube} as  a closed set 
\[
C:= \prod_{i=1}^N{ \left[\frac{m_i}{2^n} , \frac{m_i+1}{2^n} \right]}.
\]
\end{definition}

Every dyadic cube is homeomorphic to $[0,1]^N$. 
Compositions of this  homeomorphism with the characteristic maps for the cells $e \in \cE$ define a CW structure on the dyadic cubes.  
A dyadic  grid is a particular self similar grid consisting of dyadic cubes.

\begin{definition}
Let $\cG = \{C_i\}_{i=0}^M$ be a finite collection of $N$ dimensional dyadic cubes.  Then $\mathcal G$ is a \emph{dyadic  grid} if the following conditions are satisfied:
\begin{enumerate}
	\item $[0,1]^N = \bigcup_{i=1}^n C_i $,
	\item $int (C_i) \cap int(C_j) = \emptyset $ if $i \neq j$.
\end{enumerate}
\end{definition}

Figure~\ref{fig:DyadicGrid}(a) shows a simple example of dyadic grid on a unit square. 
The CW structures of the dyadic cubes are depicted  in Figure~\ref{fig:DyadicGrid}(b). 
Note that  orientation of a cell depends on the precise choice of the homeomorphism $h_i \colon [0,1]^N \to C_i$. 
In our example we reversed orientation of the cells in two cubes at the bottom left corner. 

We generalize the idea of  dyadic grids to CW complexes which are homeomorphic to a closed ball. 
In the rest of this paper we assume that there is a regular  CW structure $\cE$   on  $X$  with a single $N$ dimensional cell $e$ such that $X = \cl(e)$, thus implying that $X$ is homeomorphic to an $N$ dimensional closed ball.
The building blocks of a self similar grid are  homeomorphic to $X$ and their CW structure is again induced by these homeomorphisms.  
They also have to cover $X$ in a similar  manner as the dyadic cubes cover the unit cube. 
A self similar grid on $X$ is   a special collection of the building blocks.

\begin{definition}
Let $X$ be an $N$ dimensional regular CW complex with a single $N$ dimensional cell $e$ such that $X = \cl(e)$. The set $\cB$ is called a \emph{collection of building blocks} of $X$  if it satisfies the following conditions:
\begin{enumerate}
\item An element $C \in \cB$ is a CW complex homeomorphic to $X$, and $C \subseteq X$.
%
\item   Let   $C_1, C_2 \in \cB$ and $e_i \subset C_i $ for  $i=1,2$. If $e_1 \cap e_2 \neq \emptyset$, then either $e_1\subseteq e_2$ or $e_2\subseteq e_1$.
\item $X \in \cB$.
\end{enumerate}
\label{def:BuildingBlocks}
\end{definition}

\begin{definition}
Let $\cB$ be a  collection of building blocks of $X$. Then $\cG \subseteq \cB$ is a \emph{self similar}  grid on $X$  if the following conditions are satisfied:
\begin{enumerate}
\item $X = \bigcup_{C \in \cG} C $,
\item For all $C_1,C_2 \in \cG$ the intersection $int (C_1) \cap int(C_2) = \emptyset $ if $C_1 \neq C_2$.
\end{enumerate}
\label{def:SelfSimilarGrid}
\end{definition}

Using the same arguments as in the proof of Lemma~\ref{lem:ninimalCW}, one can  show that by removing the redundant cells, a self similar grid $\cG$ on $X$ generates a CW structure $\cE_\cG$. 
For our simple example of a dyadic grid, this structure is shown in Figure~\ref{fig:DyadicGrid}(c).    
If we fix a collection of building blocks $\cB$, then a collection of CW structures  $\setof{\cE_{\cG_i}}$ generated  by the self similar grids $\setof{\cG_i}$ is always commensurable. The following two lemmas  formalize the above statements.

\begin{lemma}
Let $\cG = \setof{C_i}_{i=0}^M$ be a self similar grid on $X$ and 
\[
 \cE'_\cG := \setof{ e : e \text{ is a cell of } C_i \text{ for some } i = 0, \ldots , M}.
 \]
 Then 
 \[
 \cE_\cG : = \setof{ e \in  \cE'_\cG :  \text{ either } e\cap e' = \emptyset \text{ or } e \subseteq e' \text{ for every } e' \in \cE'_{\cG}}
 \]
is a CW structure on $X$ generated by $\cG$.
\end{lemma}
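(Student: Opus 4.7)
My plan is to adapt the proof of Lemma~\ref{lem:ninimalCW} to this setting, with the building-block commensurability condition (Definition~\ref{def:BuildingBlocks}(2)) playing the role played there by the commensurability of Definition~\ref{def::commensurable}. The key inputs are that every $C \in \cB$ is itself a CW complex and that cells coming from different building blocks nest properly by Definition~\ref{def:BuildingBlocks}(2).

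First I would show that every point $x \in X$ lies in a unique cell of $\cE_\cG$. Since $X = \bigcup_{C_i \in \cG} C_i$ and each $C_i$ is partitioned by its own cells, the set $C(x) := \setof{ e \in \cE'_\cG : x \in e}$ is nonempty. By Definition~\ref{def:BuildingBlocks}(2) any two cells containing $x$ are comparable by inclusion, so $C(x)$ is totally ordered and its intersection $e_x := \bigcap_{e \in C(x)} e$ is itself an element of $C(x)$, hence of $\cE'_\cG$. For any $e' \in \cE'_\cG$ we either have $e_x \subseteq e'$ (if $x \in e'$, so $e' \in C(x)$) or $e_x \cap e' = \emptyset$ (otherwise, using Definition~\ref{def:BuildingBlocks}(2) combined with minimality of $e_x$), which shows $e_x \in \cE_\cG$. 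Uniqueness follows exactly as in Lemma~\ref{lem:ninimalCW}: if $x \in e \cap e'$ for two cells in $\cE_\cG$, the defining property of $\cE_\cG$ forces $e \subseteq e'$ and $e' \subseteq e$.

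Next, each $e \in \cE_\cG$ is a cell of some $C_i$, and since $C_i$ is homeomorphic to $X$ (and hence inherits the regular CW structure of $X$), $e$ is homeomorphic to $\R^{\dim(e)}$ and comes equipped with a characteristic map $\Phi_e$ satisfying the regularity condition within $C_i$. It remains to verify that $\bd(e)$ is contained in the union of strictly lower dimensional cells of $\cE_\cG$, not merely of $\cE'_\cG$. For any $x \in \bd(e) \subset \cl(e) \subset X$, the unique cell $e_x \in \cE_\cG$ from the previous paragraph satisfies $e_x \subseteq e'$ for every $e' \in C(x)$; in particular, taking $e' \subset \bd(e)$ to be the cell of $C_i$ containing $x$ (which exists because $C_i$ is a CW decomposition of itself and $x \in \bd(e) \subset C_i$), we find $e_x \subseteq e'$ and so $\dim(e_x) \leq \dim(e') < \dim(e)$.

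The main obstacle is exactly this step of propagating the $\cE'_\cG$-level decomposition of $\bd(e)$ down to $\cE_\cG$-level cells; once this is done, finiteness is automatic since $\cG$ is finite and each $C_i$ has finitely many cells, and the remaining CW axioms transfer from the building blocks. I would close by noting that the regularity of the CW structure on $X$ is inherited cell-by-cell from the regularity of the building blocks, since characteristic maps of cells in $\cE_\cG$ are restrictions of characteristic maps from the ambient $C_i$.
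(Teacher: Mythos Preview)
Your proposal is correct and follows exactly the approach the paper intends: the paper does not give a separate proof of this lemma but explicitly states that it follows ``using the same arguments as in the proof of Lemma~\ref{lem:ninimalCW}'', which is precisely what you carry out, with Definition~\ref{def:BuildingBlocks}(2) replacing the commensurability hypothesis. Your added verification that $\bd(e)$ lands in strictly lower-dimensional cells of $\cE_\cG$ (not merely $\cE'_\cG$) is a detail the paper leaves implicit.
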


\begin{lemma}
Let $\cB$ be a  collection of building blocks of $X$ and   $\setof{\cG_i}_{i = -1}^{m+1}$ be a collection of self similar grids on $X$ such that $\cG_i \subseteq \cB$ for $i \in \setof{-1,\ldots, m+1}$. Then the CW structures  $\setof{\cE_{\cG_i}}_{i=-1}^{m+1}$,   generated by the grids $\setof{\cG_i}_{i = -1}^{m+1}$, are commensurable.
\end{lemma}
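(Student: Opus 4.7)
The plan is to reduce the claim directly to Condition~(2) of Definition~\ref{def:BuildingBlocks}, which asserts that any two cells drawn from two building blocks in $\cB$ are either disjoint or nested. Since every $\cG_i \subseteq \cB$, the cells appearing in the CW structures $\cE_{\cG_i}$ inherit this comparability property across the whole family, which is precisely what commensurability demands.

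Concretely, I would fix arbitrary cells $e \in \cE_{\cG_i}$ and $e' \in \cE_{\cG_j}$ with $e \cap e' \neq \emptyset$ and verify that either $e \subseteq e'$ or $e' \subseteq e$. By the construction preceding the previous lemma, $e \in \cE_{\cG_i} \subseteq \cE'_{\cG_i}$, so $e$ is a cell of some building block $C \in \cG_i$. Likewise, $e'$ is a cell of some building block $C' \in \cG_j$. Because $\cG_i \cup \cG_j \subseteq \cB$, both $C$ and $C'$ lie in $\cB$, and Condition~(2) of Definition~\ref{def:BuildingBlocks} applied to the pair $(C,C')$ and the subcells $(e,e')$ gives the required nesting. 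This covers the cases $i = j$ and $i \neq j$ simultaneously, so the commensurability condition of Definition~\ref{def::commensurable} holds for $\bigcup_i \cE_{\cG_i}$.

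I expect no real obstacle beyond bookkeeping; the content of the lemma has effectively been packaged into Condition~(2) of the building block definition, so the proof is essentially a one-line invocation once the correct chain of memberships $e \in \cE_{\cG_i} \subseteq \cE'_{\cG_i}$, $C \in \cG_i \subseteq \cB$ is written out. The only place one has to be slightly careful is in noting that the ``minimality'' filter used to pass from $\cE'_{\cG_i}$ to $\cE_{\cG_i}$ does not disturb the argument: any cell retained in $\cE_{\cG_i}$ still originates from some building block in $\cG_i \subseteq \cB$, which is all that the hypothesis of Condition~(2) requires.
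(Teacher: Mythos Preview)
Your argument is correct and is exactly the intended one: the commensurability condition for cells drawn from $\bigcup_i \cE_{\cG_i}$ is a direct consequence of Condition~(2) in Definition~\ref{def:BuildingBlocks}, once one observes that every cell of $\cE_{\cG_i}$ is in particular a cell of some building block in $\cG_i \subseteq \cB$. The paper itself does not supply a proof for this lemma, treating it as immediate from the definitions; your write-up makes that implication explicit and handles the only subtlety (that passing from $\cE'_{\cG_i}$ to $\cE_{\cG_i}$ does not affect the provenance of the cells) correctly.
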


In the second part of this section we analyze the boundary operator associated with the CW structure $\cE_\cG$. 
Our goal is to express this operator using the incidence numbers given  by the basis   $\Lambda$ corresponding to $\cE$.  
We start by building a basis for the chain complex  $C_*(X)$ induced by the cellular structure $\cE_\cG$.
Every cell  $e \in \cE_\cG$ belongs to some $C_i \in \cG$. 
We associate this cell with one of these (arbitrary but fixed) building blocks  $C_i$. 
Let $h_i$ be the homeomorphism from $X$ to $C_i$. Then there is a cell $e'\in \cE$ such that $e = h_i(e')$.
Now the characteristic map for $e$ is given by $h_i \circ \Phi_{e'}$  where $\Phi_{e'}$ is the characteristic map for $e'$.  
The map $h_i$ induces a chain isomorphism between the chain complexes, $(h_i)_*:C_*(X) \to C_*(C_i)$.  
Fix $\lambda \in \Lambda$ to be the basis element associated with the cell $e' \in \cE$, and $\ell_i: C_i \to X$ the inclusion map. 
We choose $( \ell_i \circ h_i)_* (\lambda)$ to be a basis element in $C_*(X)$ with the cellular structure $\cE_{\cG}$, corresponding to the geometric cell $e$.
We denote $(\ell_i \circ h_i)_*(\lambda)$ by $(i,\lambda)$ and the corresponding basis by $\Lambda_\cG$. 
As before $|i,\lambda|$ is the topological cell associated  with $(i,\lambda)$.

To define a boundary operator in the basis $\Lambda_\cG$ using the incidence numbers for $\Lambda$, we need to understand the relationship between the cells belonging to different building blocks. Namely, we need a method for comparing the orientations of two intersecting cells with the same dimension, and we do so using their local homology groups. 
 
Let us consider  cells $e_i = |i,\lambda| \subset  C_i$ and  $e_j = |j,\mu| \subset  C_j$ such that $n = \dim(e_i)  = \dim(e_j)$ and $e_i \cap e_j \neq \emptyset$. 
Without loss of generality we can suppose that $e_i \subseteq e_j$ and select a point $ q \in e_i$.
Since $\bd(e_i)$ and $\bd(e_j)$ are deformation retracts of $\cl(e_i) \backslash q$ and $\cl(e_j) \backslash q$ respectively, 
then it follows that the maps induced by inclusion
\begin{eqnarray*}
r_*:H_*( \cl(e_i) ,  \bd(e_i))  \to H_*( \cl(e_i) ,  \cl(e_i) \backslash q) \\
s_*:H_*( \cl(e_j) ,  \bd(e_j))  \to H_*( \cl(e_j) ,  \cl(e_j) \backslash q) 
\end{eqnarray*}
are isomorphisms. 
By excision, the map $t_*: H_*( \cl(e_i) ,  \cl(e_i) \backslash q) \to H_*( \cl(e_j) ,  \cl(e_j) \backslash q) $ induced by inclusion is also an isomorphism. 
Thereby we obtain the following commutative diagram where all of the maps are isomorphisms: 
\begin{center}
\begin{tikzcd}
& H_*( \cl(e_i), \bd( e_i) ) \arrow{r}[swap]{r_*} & H_*( \cl(e_i) ,  \cl(e_i) \backslash q) \arrow{dd}[swap]{t_*}  \\
H_*( B^{n}, S^{n-1} ) \arrow{dr}[swap]{(h_j \circ \Phi_{|\mu|})_*} \arrow{ur}{(h_i \circ \Phi_{|\lambda|})_*} & \\
& H_*( \cl(e_j), \bd(e_j)  ) \arrow{r}{s_*} & H_*( \cl(e_j) ,  \cl(e_j) \backslash q)   
\end{tikzcd}
\end{center}
Hence for a fixed generator $a \in H_*(B^n,S^{n-1})$ then 
\begin{equation}
r_*^{-1} \circ t_*^{-1} \circ s_* \circ (h_j \circ \Phi_{|\mu|})_*  (a)   = \setof{(i,\lambda):(j,\mu)}  (h_i \circ \Phi_{|\lambda|})_*  (a) 
\label{eqn:Orientation}
\end{equation}
where $\setof{(i,\lambda):(j,\mu)}  $ is a unit. 
If we are working with $\Z$ coefficients then $\setof{(i,\lambda):(j,\mu)} = \pm 1 $ and  we say that $e_i$ and $e_j$ have the same orientation if  $\setof{(i,\lambda):(j,\mu)}  = 1$ and the opposite otherwise.

 First we  evaluate a boundary for some  cells in the complex  shown in Figure~\ref{fig:DyadicGrid}. The boundary of the two dimensional cell in the top left square $C$ consists of five one dimensional cells. Three of them belong to $C$. However the bottom edge was replaced by two smaller edges. Orientation of the removed edge agrees with the smaller edges. Hence we can simply replace this edge by those smaller edges. For the bottom right square  the problem is a little more complicated. The missing left edge is  divided into two smaller edges but orientation of the bottom smaller edge is opposite. Therefore we need to reverse  orientation of this cell by multiplying it by $-1$. The following lemma formalizes this procedure.
\begin{lemma}
Let $(i,\lambda) \in \Lambda^n_\cG$. Then 

\begin{equation}
\partial_n(i,\lambda) = \sum_{\mu \in \Lambda^{n-1}} [\lambda : \mu ] \sum_{(j,\tau) \in \cN(i,\mu)} \setof{(i,\mu):(j,\tau)}(j,\tau),  
\label{eqn:boundary}
\end{equation}
where $ \cN(i,\mu) = \setof{(j,\tau) \in \Lambda^{n-1}_\cG : |i,\mu| \cap |j,\tau| \neq \emptyset}$.
\end{lemma}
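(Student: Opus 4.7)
The plan is to write $(i,\lambda) = (\ell_i \circ h_i)_*(\lambda)$ as the image of a basis element under a cellular chain map, and then to exploit naturality of the cellular boundary operator. The first step is to verify that $\ell_i \circ h_i \colon X \to X$ is cellular when regarded as a map from $(X,\cE)$ to $(X,\cE_\cG)$. Each $k$-cell $|\mu| \in \cE$ is sent homeomorphically onto $|i,\mu| \subset C_i$, and by Definition~\ref{def:BuildingBlocks} together with the minimality built into $\cE_\cG$, any cell of a building block decomposes as a disjoint union of cells of $\cE_\cG$ of dimension at most $k$. Hence the $k$-skeleton of $\cE$ lands inside the $k$-skeleton of $\cE_\cG$, so $(\ell_i \circ h_i)_*$ is a chain map and commutes with $\partial$.

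Using this,
\[
\partial_n(i,\lambda) = (\ell_i \circ h_i)_*\bigl(\partial_n \lambda\bigr) = \sum_{\mu \in \Lambda^{n-1}} [\lambda : \mu]\, (\ell_i \circ h_i)_*(\mu),
\]
so the task reduces to expressing $(\ell_i \circ h_i)_*(\mu) \in C_{n-1}(X^{\cE_\cG})$ in the basis $\Lambda^{n-1}_\cG$. The topological cell $|i,\mu|$ is in general not itself a cell of $\cE_\cG$, but decomposes as a disjoint union of such cells. If $(j,\tau) \in \Lambda^{n-1}_\cG$ satisfies $|j,\tau| \cap |i,\mu| \neq \emptyset$, then condition~(2) of Definition~\ref{def:BuildingBlocks}, combined with the minimality of cells in $\cE_\cG$, forces $|j,\tau| \subseteq |i,\mu|$; hence only the cells in $\cN(i,\mu)$ can appear with nonzero coefficient.

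The main obstacle is pinning down those coefficients, and this is where the orientation signs defined in (\ref{eqn:Orientation}) enter. I would invoke the standard local-degree description of cellular chain maps: for any chosen $q \in |j,\tau|$, the coefficient of $(j,\tau)$ in $(\ell_i \circ h_i)_*(\mu)$ is read off by tracing a fixed generator $a \in H_{n-1}(B^{n-1},S^{n-2})$ through the composition of the characteristic map $(h_i \circ \Phi_{|\mu|})_*$ with the local-homology excision at $q$ and the inverse of the characteristic map of $|j,\tau|$. This is precisely the commutative diagram used to define $\setof{(i,\mu):(j,\tau)}$ in (\ref{eqn:Orientation}), so the coefficient equals $\setof{(i,\mu):(j,\tau)}$. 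Substitution into the displayed formula gives the claimed expression.
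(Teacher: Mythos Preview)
Your argument is correct and is essentially the same as the paper's, just cast in more explicit functorial language: the paper computes $\partial(i,\lambda)$ first inside $C_i$ (implicitly using that $(h_i)_*$ is a chain isomorphism) and then ``replaces each $(i,\mu)$ by the elements in $\cN(i,\mu)$ with the appropriate orientation sign,'' which is exactly your computation of $(\ell_i\circ h_i)_*(\mu)$ via local degree. Your version has the virtue of spelling out why the coefficient is precisely $\{(i,\mu):(j,\tau)\}$, a point the paper leaves informal.
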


\begin{proof}

First we consider the boundary of  $(i,\lambda)$ inside  $C_i$ equipped   with the basis $\setof{ (i,\lambda) : \lambda \in \Lambda }$. 
This boundary can be written as $\sum_{\mu \in \Lambda^{n-1}} [\lambda : \mu ] (i,\mu)$. 
However this is not necessarily the boundary of $(i,\lambda)$ in the basis $\Lambda_\cG$.
This is because some elements  $(i,\mu)$ might  be represented by basis elements $(j,\tau)$ corresponding to a different building block,  
 or they can be broken up to smaller pieces also from different building blocks. 
The set $\cN(i,\mu) \subset \Lambda_\cG$ is a representation of $(i,\mu)$ in the basis $\Lambda_\cG$. 
 Geometrically, this means that $ \cl(|i,\mu|) = \bigcup_{(j,\tau)\in \cN (i , \mu)} \cl(|j,\tau|)$. 
To obtain   $\partial_n(i,\lambda)$ we have to replace each $(i,\mu)$ by the  elements in $\cN(i,\mu)$. 
In order to accommodate possible differences in  orientation, the sign of the incidence number $[\lambda : \mu]$ has to be changed if $(j,\tau) \in \cN(i,\mu)$ and $(i,\mu)$ have  opposite orientation. Therefore each term contained  in the second sum is multiplied by $\setof{(i,\mu), (j,\tau)}$.
\end{proof}

\section{ Construction of compatible  CW structures on a unit cube}
\label{sec::CompatibleCW}

By now the results presented in this paper have been mostly theoretical. 
In this section we restrict our attention to functions $f\in C^1([0,1]^N,\R)$ and present an algorithm for constructing  a  CW structure  compatible with  the sub-level set $X_t =  f^{-1}((-\infty, t])$.
It is complicated to check the conditions in Definition~\ref{def::CompatibleCW} algorithmically, especially Condition (3). 
Instead we introduce analytical conditions for dyadic cubes that force their cells to satisfy the conditions in Definition~\ref{def::CompatibleCW}. 
These conditions can be checked using interval arithmetic, and the algorithm can be carried out by a computer.  
We divide this section in two parts. In the first part we present the conditions posed on the dyadic cubes. The second part describes the algorithm for constructing  a  compatible  CW structures on a unit cube.

\subsection{Verified Cubes}
The dyadic grid $\cG$ is said to be $(f,t)$-\emph{verified} if every dyadic cube $C \in \cG$ is $(f,t)$-verified (cf. Definition \ref{def::IntroAnalyticVerified}). 
We will show that if $\cG$ is $(f,t)$-verified, then the associated CW complex $ \cE_{\cG}$ is compatible with $X_t$.   
Note that every cell in $\cE_{\cG}$ belongs to at least one  dyadic cube $C \in \cG$.  
Hence it suffices to show that every cell in a $(f,t)$-verified cube $ C \in \cG$ satisfies one of the three conditions in Definition~\ref{def::CompatibleCW}.  
The following lemma allows us to prove this for just the top dimensional cell.

\begin{lemma}
If a cube $C$ is $(f,t)$-verified, then for every cell $e \subseteq C$, the cube $\cl(e)$ is $(f,t)$-verified.
\label{prop::VerifiedInheritence}
\end{lemma}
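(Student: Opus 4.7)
The plan is to transfer the verification data for $C$ directly to $\cl(e)$ by restricting attention to the coordinate directions that remain free along $e$. Suppose $C$ is $(f,t)$-verified using the coordinate vectors indexed by a set $L = \{l_1,\ldots,l_n\} \subseteq \{1,\ldots,N\}$. Every cell $e$ of the dyadic cube $C$ is a face of $C$, hence is specified by a set $J \subseteq \{1,\ldots,N\}$ of coordinates that are fixed at endpoint values on $e$, with $\dim(e) = d = N-|J|$. Under this description $\cl(e)$ is itself a $d$-dimensional dyadic cube in the coordinates $\{1,\ldots,N\}\setminus J$, and the definition of $(f,t)$-verified applies verbatim in this lower-dimensional setting.

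The candidate verification set for $\cl(e)$ will be $L' := L \setminus J$, of size $m \leq d$. The partial derivative condition transfers trivially: for each $l \in L'$, the inclusion $\cl(e) \subseteq C$ gives
\[
\frac{\partial f}{\partial x_l}(\cl(e)) \subseteq \frac{\partial f}{\partial x_l}(C),
\]
which avoids $0$ by hypothesis.

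The core step is the cell condition. Let $e'$ be any $(d-m)$-dimensional cell of $\cl(e)$ orthogonal to the vectors indexed by $L'$. Then $e'$ has the coordinates in $L'$ fixed at endpoints, and being a cell of $\cl(e)$ it also has the coordinates in $J$ fixed. Hence all coordinates in $L' \cup J \supseteq L$ are fixed on $e'$. Let $c$ denote the cell of $C$ obtained by fixing precisely the coordinates in $L$ at the same endpoint values that $e'$ exhibits, and leaving the remaining $N-n$ coordinates free. Then $c$ is an $(N-n)$-dimensional cell of $C$ orthogonal to $L$, and $e' \subseteq c$. By the verification hypothesis on $C$, either $f(\cl(c)) > t$ or $f(\cl(c)) \leq t$; since $f(\cl(e')) \subseteq f(\cl(c))$, the same inequality holds for $e'$. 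This establishes that $\cl(e)$ is $(f,t)$-verified via $L'$.

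I expect no real analytic difficulty in this proof; the entire argument is a combinatorial bookkeeping exercise that tracks which coordinates are fixed and which are free across three nested objects ($e'\subseteq\cl(e)\subseteq C$). The one point to be careful about is the dimension count showing that a $(d-m)$-dimensional cell of $\cl(e)$ orthogonal to $L'$ is automatically contained in a single $(N-n)$-dimensional corner cell of $C$ orthogonal to $L$. Degenerate cases pose no obstacle: if $n=0$ or $L\subseteq J$ so that $L'=\emptyset$, the argument still applies with $m=0$, and one concludes that $\cl(e)$ itself, or $e'=\cl(e)$, is contained in an $f$-verified cell of $C$ and so inherits the inequality.
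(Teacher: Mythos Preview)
Your proof is correct and follows essentially the same approach as the paper's: both arguments restrict the verification directions for $C$ to those that remain free along $e$, and then embed any test cell $e'$ of $\cl(e)$ into an $(N-n)$-dimensional cell of $C$ orthogonal to the full set $L$. Your use of the index sets $L$, $J$, and $L'=L\setminus J$ makes the combinatorics slightly more explicit than the paper's reordering of the sequence $\{l_i\}$, but the underlying argument is identical.
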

\begin{proof}
Fix vectors $x_{l_1}, \dots , x_{l_n}$ such that $ 0 \notin \frac{ \partial f }{\partial x_{l_i}}(C)$ for all $ i \leq n$, with $0 \leq n \leq N$.  
Select any cell $ e \subseteq C$. 
There exists a $k \leq n$ such that if we reorder the sequence $ \{ l_i \}$, then $ 0 \notin \frac{ \partial f }{\partial x_{l_i}}(\cl(e))$ for all $ i \leq k$ and $ \cl(e)$ is orthogonal to $x_{l_i}$ for any $ i >k$. 
Select any $ \dim(e)-k$ dimensional cell $ e' \subseteq \cl(e)$ such that $ e'$ is orthogonal to the vectors $ x_{l_1}, \dots , x_{l_k}$. 
Since $C$ is a cube, then there exists a $N-n$ cell $e'' \subseteq C$ such that $ e' \subseteq e''$ and $e''$ is orthogonal to the vectors $ x_{l_1}, \dots , e_{l_n}$. 
Since $C$ is $(f,t)$-verified, then either $ f( \cl(e'') ) >t$ or $ f(\cl(e'')) \leq t$. 
Thereby either $ f( \cl(e') ) >t$ or $ f(\cl(e')) \leq t$, thus showing that $ \cl(e)$ is $(f,t)$-verified. 
\end{proof}

\noindent There are three special cases of a cube $C$ being $(f,t)$-verified corresponding to the cases when $n =0$ and $n=N$:
\begin{enumerate}
	\item $f(C) > t $
	\item $f(C) \leq  t$ 
	\item $0 \notin \frac{\partial f}{\partial x_i}(C)$ for $ 1 \leq i \leq N$. 
\end{enumerate}
If a $(f,t)$-verified cube  $C$ satisfies one of Conditions (1) or (2) above, then all of its cells satisfy Condition (1) or (2) in Definition~\ref{def::CompatibleCW} respectively.  
On the other hand, if a cube $C$ satisfies neither of Conditions (1) or (2) above, then we must construct a deformation retraction required by Condition (3) in Definition~\ref{def::CompatibleCW}.
Using rescaling we can further restrict our attention to a unit cube.

\begin{figure}[tb]
\subfigure{\includegraphics[width = .4\textwidth]{./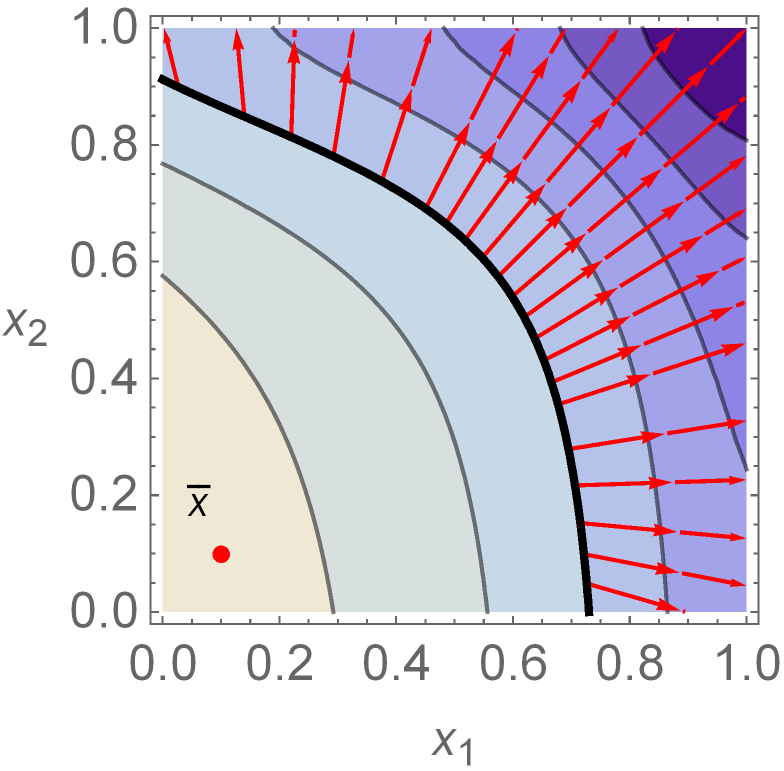}}
\subfigure{\includegraphics[width = .39\textwidth]{./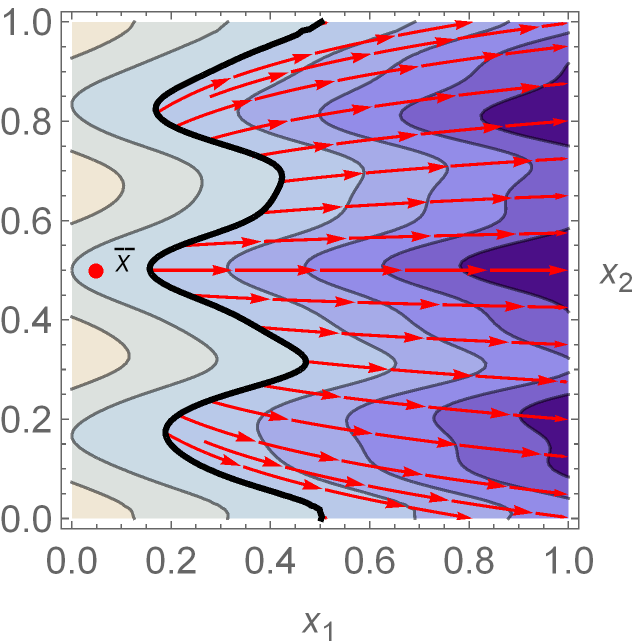}}
\caption{(a) A deformation retract of a sub-level set to the boundary when the cube satisfies Condition (3).  (b)  A deformation retract of a sub-level set to the boundary for a generic $(f,t)$-verified cube. }
\label{fig::Homotopy}
\end{figure}

Before constructing the deformation retraction in full generality, we demonstrate the basic ideas on two examples shown in Figure~\ref {fig::Homotopy}. 
The unit cube depicted in  Figure~\ref{fig::Homotopy}(a) satisfies Condition (3) above. 
In this case both partial derivatives are negative. 
For $\bar{\bx} \in \inter ([0,1]^2  \setminus X_t)$
we define a vector field $\xi(\bx) = \bx - \bar{\bx}$. 
The set $X_t$ can be retracted onto $X_t \cap \partial [0,1]^2$ along the flow lines, indicated by the arrows in   Figure~\ref {fig::Homotopy}(a), of the flow generated by $\xi(\bx)$.  To ensure that the function $f \circ h(\bx,s)$ is non-increasing in $s$ we have to choose $\bar{\bx}$ sufficiently close to the origin.

For the example shown in Figure~\ref {fig::Homotopy}(b) the partial derivative $\frac{\partial f}{\partial x_1} ([0,1]^2) < 0$ and $\frac{\partial f}{\partial x_2}$ is both positive and negative on $[0,1]^2$. 
In this case the cells $e_1 = \{0 \} \times (0,1)$ and $e_2 = \{1\} \times (0,1)$ are orthogonal to the coordinate vector $x_1$, and satisfy $f(\cl(e_1)) >t$ and $f(\cl(e_2)) \leq t$.
Hence cube $[0,1]^2$ is $(f,t)$-verified. 
In this case we define the vector  field  $\xi(x_1,x_2) = (x_1 - \bar{x}_1, \varepsilon (x _2 - \frac{1}{2}))$ for some point $(\bar{x}_1,\frac{1}{2}) \in \inter (X_t \setminus [0,1]^2)$ and $\varepsilon$ small enough. Again we retract the set $X_t$ along the flow lines indicated by the arrows.
The first coordinate of the vector field $\xi$ is similar to the one used in the previous example. 
Part of the set $X_t \cap \partial [0,1]^2$ is contained inside  the  horizontal edges of the cube. The second  coordinate of the vector field $\xi$ ensures that some flow lines reach this part of the set  $X_t \cap \partial [0,1]^2$.
The following two lemmas formalize the above discussion.

\begin{lemma}
If $0 \notin \frac{\partial f}{\partial x_i}([0,1]^N)$ for $ 1 \leq i \leq N$ and $f$ passes through $t$, then there exists a deformation retraction $h$ from $X_t $ onto $X_t \cap \partial [0,1]^N$. Moreover  $f \circ h(\bx,s) $ is non-increasing in $s$. 
\label{lem::C3}
\end{lemma}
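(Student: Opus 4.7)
My plan is to construct $h$ as a radial projection from a carefully chosen point $\bar{\bx}\in\inter[0,1]^N$, formalizing the vector-field idea $\xi(\bx)=\bx-\bar{\bx}$ sketched in Figure~\ref{fig::Homotopy}(a). After reflecting coordinates if needed, I may assume $\frac{\partial f}{\partial x_i}<0$ on $[0,1]^N$ for every $i$, so that $f$ is strictly decreasing in each variable and attains its maximum at $\mathbf{0}$. The hypothesis that $f$ passes through $t$ then forces $f(\mathbf{0})>t$ (otherwise $f\leq t$ everywhere); in particular $X_t$ is bounded away from the origin, so $\delta:=\min_{\bx\in X_t}\|\bx\|_1>0$. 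Compactness also gives uniform bounds $0<m\leq\bigl|\frac{\partial f}{\partial x_i}(\by)\bigr|\leq M$ on $[0,1]^N$.

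Next, I fix $\bar{\bx}\in\inter[0,1]^N$ with $\|\bar{\bx}\|_\infty$ so small that (i) $f(\bar{\bx})>t$ (by continuity near $\mathbf{0}$) and (ii) $\nabla f(\by)\cdot(\bx-\bar{\bx})<0$ uniformly for every $\by\in[0,1]^N$ and every $\bx\in X_t$. To establish (ii), split the sum $\sum_i \frac{\partial f}{\partial x_i}(\by)(x_i-\bar{x}_i)$ by the sign of $x_i-\bar{x}_i$: the indices with $x_i\geq\bar{x}_i$ contribute at most $-m(\|\bx\|_1-\|\bar{\bx}\|_1)\leq -m(\delta - N\|\bar{\bx}\|_\infty)$, while the indices with $x_i<\bar{x}_i$ contribute at most $MN\|\bar{\bx}\|_\infty$. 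Choosing $\|\bar{\bx}\|_\infty$ sufficiently small drives the total below $-m\delta/2<0$.

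For each $\bx\in X_t$, let $r(\bx)\geq 1$ be the unique real number such that $\bar{\bx}+r(\bx)(\bx-\bar{\bx})\in\partial[0,1]^N$; convexity of the cube makes $r$ well defined and continuous on $X_t$, with $r(\bx)=1$ precisely when $\bx\in\partial[0,1]^N$. Define
\[
h(\bx,s) = \bar{\bx} + \bigl(1 + s(r(\bx)-1)\bigr)(\bx-\bar{\bx}),\qquad s\in[0,1].
\]
The retraction properties $h(\bx,0)=\bx$, $h(\bx,1)\in\partial[0,1]^N$, and $h(\bx,s)=\bx$ for every $s$ whenever $\bx\in X_t\cap\partial[0,1]^N$ then follow immediately; what remains is to verify $h(\bx,s)\in X_t$ together with the monotonicity of $f\circ h(\bx,\cdot)$.

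Both follow at once from the chain rule,
\[
\frac{d}{ds}f(h(\bx,s)) = (r(\bx)-1)\,\nabla f(h(\bx,s))\cdot(\bx-\bar{\bx}),
\]
which is $\leq 0$ by (ii) together with $r(\bx)\geq 1$. Hence $f\circ h(\bx,\cdot)$ is non-increasing in $s$, and combined with $f(h(\bx,0))=f(\bx)\leq t$ this yields $h(\bx,s)\in X_t$ for all $s$. The principal technical obstacle is precisely the uniform estimate (ii), in which one must balance the lower bound on $\|\bx\|_1$ on $X_t$ against the uniform gradient bounds coming from the verified-cube hypothesis; every other step reduces to routine convexity or continuity checks.
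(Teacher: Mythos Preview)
Your proof is correct and rests on the same geometric idea as the paper's: after reflecting so that all partials are negative, pick $\bar{\bx}$ close enough to the origin that the radial field $\xi(\bx)=\bx-\bar{\bx}$ satisfies $\nabla f\cdot(\bx-\bar{\bx})<0$ on $X_t$, and then push $X_t$ outward along those rays. The difference is in how the retraction is extracted. The paper invokes the Wa\.{z}ewski principle: it shows $X_t$ is a Wa\.{z}ewski set for the linear flow generated by $\xi$, identifies the immediate exit set as $X_t\cap\partial[0,1]^N$, and reads off the deformation retraction from the exit-time map. You instead construct $h$ explicitly as a straight-line homotopy using the boundary-hitting function $r(\bx)$, which is essentially the Minkowski functional of $[0,1]^N$ centred at $\bar{\bx}$.

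Your route is more elementary and self-contained; the paper's is terser once Wa\.{z}ewski is available. One small point worth noting: your estimate (ii) is deliberately stronger than the paper's version---you require $\nabla f(\by)\cdot(\bx-\bar{\bx})<0$ for every $\by\in[0,1]^N$ and $\bx\in X_t$, not just for $\by\in X_t$---and this is exactly what lets you apply the chain rule at $\by=h(\bx,s)$ without first knowing that $h(\bx,s)\in X_t$, thereby avoiding any circularity. The paper sidesteps this because along the actual flow one can argue inductively that the trajectory remains in $X_t$ until it exits through $\partial[0,1]^N$.
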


\begin{proof}

Without loss of generality we  can suppose that $\frac{\partial f}{\partial x_i}(\bx) < 0$ for all $\bx \in [0,1]^N$ and $ 1 \leq i \leq N$.
Thereby the value of $f$ at the origin has to be  greater than $t$. To prove the existence of a deformation retraction $h : X_t \to X_t \cap \partial [0,1]^N$ we use the Wa\.{z}ewski principal. We  will show that $X_t$ is a Wa\.{z}ewski set for  a flow $\varphi$ corresponding to an appropriate  linear vector field $\xi(\bx)$ on $\R^N$.

The vector field is given by $\xi(\bx) = \bx - \bar{\bx}$ where $\bar{\bx} \in \inter(  [0,1]^N \setminus X_t)$ such that  $(\bx-\bar{\bx} ) \cdot \nabla f (\bx) < 0$ for  all $\bx \in X_t$. 
To show that there exists such  $\bar{\bx}$  we analyze the continuous function $\bx \cdot \nabla f (\bx)$.  
For every $\bx \in [0,1]^N$ the function  $\bx \cdot \nabla f (\bx) \leq 0$ and it  attains zero only at the origin. 

So there exists $\varepsilon > 0$ such that $\bx \cdot \nabla f (\bx) \leq - \varepsilon$ on $X_t$. Moreover for $\delta > 0$ sufficiently small $\bx \cdot \nabla f (\bx) >  \frac{- \varepsilon}{2}$ for all $\bx \in B_\delta := \setof{\by \in [0,1]^N : \| \by \| < \delta}$. 
We choose  $\bar{\bx}$ to be an arbitrary but fixed point in  $B_\delta \cap \inter([0,1]^N)$. 
Clearly $\bar{\bx} \not\in X_t$ and $(\bx-\bar{\bx} ) \cdot \nabla f (\bx) < 0$ for all $\bx \in X_t$.

Let $\varphi$ be a flow given by the vector  field $\xi(\bx)$. We define the exit set and the immediate exit set for $X_t$ as follows:
\begin{eqnarray*}
	W^0 &=& \setof{ \bx \in X_t : \exists \, s >0 \mbox{ such that } \varphi(\bx,s) \notin X_t  }, \\
	W^- &=& \setof{ \bx \in X_t :  \forall \, s >0 \mbox{ the set }  \varphi(\bx,[0,s)) \not\subset X_t }. 
\end{eqnarray*}

The  unstable fixed point of the linear system $\bx' = \xi(\bx)$ is not  in $X_t$. Therefore all the orbits have to leave $X_t$ and $W^0 = X_t$.
By the chain rule $\frac{d}{d s} f \circ \varphi(\bx,s) = (\bx-\bar{\bx} ) \cdot \nabla f (\bx)$ which is negative  for all $\bx \in X_t$ and $f \circ \varphi(\bx,s)$ is strictly decreasing with respect to $s$ for all  $\bx \in X_t$. 
Hence for every $\bx \in X_t \cap (0,1)^N$  there exists $ s_\bx >0$ such that $ \varphi(\bx,[0,s_\bx)) \subseteq X_t$ and the immediate exit set satisfies $W^- \subseteq X_t \cap \partial [0,1]^N$. 
The point $\bar{\bx}$  is in $\inter( [0,1]^N)$.  Thus  for every $\bx \in \partial [0,1]^N$ the vector field $\xi(\bx)$ points out of the unit cube and  $ \varphi(\bx,s) \notin [0,1]^N$ for $s > 0$. Thereby $ W^- = X_t \cap \partial [0,1]^N$. 

The sets $X_t$ and $X_t \cap \partial [0,1]^N$ are closed so it follows that  $X_t$ is a Wa\.{z}ewski set.  
By the Wa\.{z}ewski principal there exists a deformation retraction $h(\bx,s)$ from $W^0 =  X_t$ onto $W^- =  X_t \cap \partial [0,1]^N$.  The set $X_t$ is retracted along the flow lines of $\varphi$. So $f \circ h(\bx,s) $ is non-increasing in $s$.

\end{proof}

\begin{lemma}
If $[0,1]^N$ is $(f,t)$-verified and $f$ passes through $t$, then  there exists a deformation retraction $h$ from $X_t$ onto $X_t \cap \partial [0,1]^N $. Moreover $f \circ h(\bx,s) $ is non-increasing in $s$. 
\label{lem::C4}
\end{lemma}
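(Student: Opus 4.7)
My plan is to mimic the Wa\.{z}ewski argument from Lemma~\ref{lem::C3} but with a vector field tailored to the anisotropy of the $(f,t)$-verified condition. After possibly reflecting coordinates I may assume $\partial f/\partial x_{l_i} < 0$ on $[0,1]^N$ for every $i$, and I write $L = \setof{l_1,\ldots,l_n}$. Because $f$ is strictly monotone decreasing in each $x_{l_i}$, the face $F_0 := \setof{\bx \in [0,1]^N : x_{l_i} = 0 \text{ for all } i \in L}$ carries the pointwise maximum of $f$ over the cube; the verified hypothesis then forces $f(F_0) > t$, since otherwise monotonicity would propagate $f \leq t$ to the whole cube and contradict the assumption that $f$ passes through $t$. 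Continuity and compactness give a $\delta>0$ such that $f > t$ on the slab $S_\delta := \setof{\bx \in [0,1]^N : x_{l_i} \leq \delta,\ i \in L}$; in particular $X_t \cap S_\delta = \emptyset$.

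Next I would build a linear vector field $\xi$ which is order one in the $L$ directions and order $\varepsilon$ in the remaining directions. Concretely, fix a point $\bar\bx$ with $\bar x_{l_i}$ a small positive constant for $i \in L$ and $\bar x_j = \tfrac12$ for $j \notin L$, and set $\xi_k(\bx) = x_k - \bar x_k$ for $k\in L$ and $\xi_k(\bx) = \varepsilon(x_k - \tfrac12)$ for $k \notin L$. The central computation is
\[
\nabla f(\bx)\cdot \xi(\bx) = \sum_{i\in L} \partial_{l_i} f(\bx)\,x_{l_i} \;-\; \sum_{i\in L} \partial_{l_i} f(\bx)\,\bar x_{l_i} \;+\; \varepsilon \sum_{j\notin L} \partial_{j} f(\bx)\bigl(x_j - \tfrac12\bigr).
\]
The first sum is bounded above on $X_t$ by $-m\delta$, where $m := \min_{i,\bx} |\partial_{l_i} f(\bx)| > 0$; indeed $X_t \cap S_\delta = \emptyset$ forces $\max_i x_{l_i} > \delta$, so the weighted sum $\sum_i |\partial_{l_i} f(\bx)|\,x_{l_i} \geq m\delta$. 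Choosing $\bar x_{l_i}$ small enough and then $\varepsilon$ small enough makes the remaining two terms each less than $m\delta/4$ in absolute value, so $\nabla f \cdot \xi \leq -m\delta/2 < 0$ throughout $X_t$, and $f$ is strictly decreasing along every orbit of the flow $\varphi$ generated by $\xi$.

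To finish via Wa\.{z}ewski as in Lemma~\ref{lem::C3}, I would verify that $\xi$ points strictly out of $[0,1]^N$ along each boundary face: at $\setof{x_{l_i} = 0}$ we have $\xi_{l_i} = -\bar x_{l_i} < 0$, at $\setof{x_{l_i}=1}$ we have $\xi_{l_i} = 1-\bar x_{l_i} > 0$, and the $L^c$-faces are handled symmetrically about $\tfrac12$. Since $\xi$ is an expanding affine field, every orbit through a point other than $\bar\bx$ (and $\bar\bx \notin X_t$) exits $[0,1]^N$ in finite time, so the exit set $W^0$ equals all of $X_t$, and the combination of outward flow at $\partial[0,1]^N$ with strict decrease of $f$ along orbits gives the immediate exit set $W^- = X_t \cap \partial[0,1]^N$. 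Wa\.{z}ewski's principle then yields a deformation retraction $h$ from $X_t$ onto $X_t \cap \partial [0,1]^N$ obtained by flowing along $\varphi$, and $f \circ h(\bx,s)$ is non-increasing in $s$ because the deformation follows flow lines of $\xi$. I expect the main obstacle to be the sign estimate $\nabla f \cdot \xi < 0$ on $X_t$: the partial derivatives $\partial_j f$ for $j \notin L$ are entirely uncontrolled, so the $\varepsilon$-perturbation in those directions has to be genuinely dominated by the contribution from the $L$ directions, and it is precisely the slab $S_\delta$ together with the uniform strict negativity of $\partial_{l_i} f$ that enables this domination.
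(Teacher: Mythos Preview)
Your proposal is correct and follows essentially the same strategy as the paper: both construct an affine vector field $\xi$ centered at a point $\bar\bx$ with $\bar x_j = \tfrac12$ for $j\notin L$, give full weight to the $L$-directions and a small weight in the remaining directions, verify $\nabla f\cdot\xi<0$ on $X_t$ by dominating the uncontrolled $L^c$-contribution, and then apply the Wa\.{z}ewski principle exactly as in Lemma~\ref{lem::C3}. The only cosmetic differences are that the paper reindexes so that $L=\setof{1,\dots,k}$ and chooses the specific small weight $\varepsilon/(1+\eta)$ with $\eta=\sup_{X_t}|\pi_B(\bx-\bar\bx)\cdot\nabla f|$, whereas you introduce the slab $S_\delta$ to make the domination estimate more explicit.
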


\begin{proof}

Without loss of generality we can suppose   that $\frac{\partial f}{\partial x_i}(\bx) < 0$ for all $\bx \in [0,1]^N$ and $1 \leq i \leq k$.  We denote by $\pi_A$  projection on the first $k$ coordinates and by $\pi_B$  projection on the last $N-k$ coordinates. 
The cube $B = \pi_{B}([0,1]^N)$ is the closure of a cell in $[0,1]^N$ which is orthogonal to the vectors $ x_{1}, \dots , x_{k}$, so either $ f(B) >t$ or $ f(B) \leq t$.  
Since $ t \in f([0,1]^N)$ and $ \frac{\partial f}{ \partial x_i}([0,1]^N) <0$ for $1 \leq i \leq k$ then it follows that $ f(B) >t$. 

We apply the Wa\.{z}ewski principal  to a linear flow $\varphi$ radiating from some point $\bar{\bx}$.   By the same argument as in the previous proof one can show that for  some  $\varepsilon > 0$ there exists $\bar{\bx} = (\bar{x}_1, \ldots, \bar{x}_k, {1/2}, \ldots, {1/2}) \in \inter ( [0,1]^N \setminus X_t)$  such that    $\pi_A(\bar{\bx} - \bx) \cdot \nabla f(\bx) < -\varepsilon$ for all  $\bx \in X_t$.  
We now define the following vector field 
\[
\xi(\bx) = \pi_A(\bx - \bar{\bx}) + \frac{\varepsilon}{1 + \eta} \pi_B(\bx -\bar{\bx})
\]
where $\eta  =  \sup_{\bx \in X_t} | \pi_B (\bx-\bar{\bx}) \cdot \nabla f(\bx) |$. The value $\eta$ has to be finite because it is  supremum of  a continuous function on the compact set $X_t$. 
Define $\varphi$ to be the flow generated by the vector field $ \xi$.


The function $f$ is non-increasing along the flow lines of $\varphi$ as corroborated by the following calculation

\begin{eqnarray*}
\frac{d}{dt} f \circ \varphi(\bx,s) &=& \left(\pi_A(\bx - \bar{\bx}) + \frac{\varepsilon}{1 + \eta} \pi_B(\bx -\bar{\bx})\right) \cdot \nabla f(\varphi(\bx,s)) \\
&<& -\epsilon + \epsilon \frac{ \eta}{1 + \eta} < 0 .
\end{eqnarray*}
Applying the Wa\.{z}ewski principal  to the same sets $W^0$ and $W^-$ as in the proof of the previous lemma guarantees that there exists a deformation retraction $h$ from $X_t$ onto $X_t \cap \partial [0,1]^N $ and $f \circ h(\bx,s) $ is non-increasing in $s$.


\end{proof}

Finally we are in the position to show that an $(f,t)$-verified grid $\cG$ induces a CW structure $\cE_\cG$ compatible with $X_t$.

\begin{theorem}
Let  $f\in C^1([0,1]^N,\R)$. If the  dyadic grid $\cG$ on  $[0,1]^N$  is $(f,t)$-verified, then the CW structure $\cE_\cG$  is compatible  with $X_t$.
\label{thm::VerifiedMeansCompatible}
\end{theorem}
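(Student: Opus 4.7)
The plan is to reduce the compatibility condition to a cell-by-cell check, and then for each cell use the verification hypothesis together with Lemma~\ref{prop::VerifiedInheritence} and Lemma~\ref{lem::C4}. Every cell $e \in \cE_\cG$ lies inside (and is a cell of) some dyadic cube $C \in \cG$, so by Lemma~\ref{prop::VerifiedInheritence} the closure $\cl(e)$ is itself $(f,t)$-verified when regarded as a $\dim(e)$-dimensional cube. Hence it suffices to prove the following claim: \emph{if $D$ is an $M$-dimensional $(f,t)$-verified cube and $e_D$ is its top cell (so $\cl(e_D) = D$), then $e_D$ satisfies one of the three conditions of Definition~\ref{def::CompatibleCW}.} Applying the claim to $D = \cl(e)$ for each $e \in \cE_\cG$ will give the theorem.

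To prove the claim, I unfold the definition of $(f,t)$-verification with witnessing coordinate directions $x_{l_1},\dots,x_{l_n}$, where $0 \le n \le M$. If $n=0$, the only $M$-dimensional cell of $D$ orthogonal to the empty family is $D$ itself, so either $f(D) > t$ or $f(D) \le t$ and we land in Condition~(1) or~(2) of Definition~\ref{def::CompatibleCW}. If $n \ge 1$ but $t \notin f(D)$, then by continuity $f$ is either strictly above or weakly below $t$ on all of $D$, again giving Condition~(1) or~(2). The remaining case is $n\ge 1$ together with $t \in f(D)$; here I affinely rescale $D$ onto $[0,1]^M$ (this preserves the verification data since it is defined by coordinate-axis conditions) and invoke Lemma~\ref{lem::C4} on the rescaled cube, obtaining a deformation retraction $h$ from $X_t \cap D$ onto $X_t \cap \partial D = X_t \cap \bd(e_D)$ with $f \circ h(\bx,s)$ non-increasing in $s$. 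That is exactly Condition~(3), completing the claim and hence the theorem.

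The potentially delicate step is the rescaling in the last case, because Lemma~\ref{lem::C4} is stated for the standard unit cube in ambient $\R^N$, whereas a general $\cl(e)$ sits inside $[0,1]^N$ as a lower-dimensional coordinate-aligned face. However, since both $(f,t)$-verification and Lemma~\ref{lem::C4} are phrased purely in terms of coordinate-axis partials and coordinate-orthogonal faces, the identification of $\cl(e)$ with $[0,1]^{\dim(e)}$ just reindexes the relevant partial derivatives, and the vector field constructed in the proof of Lemma~\ref{lem::C4} lifts back to a field tangent to $\cl(e)$ in $\R^N$. Beyond this bookkeeping, no new analytic input is needed, and the theorem follows.
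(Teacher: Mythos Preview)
Your proof is correct and follows essentially the same route as the paper: pick a verified cube $C\in\cG$ containing $e$, apply Lemma~\ref{prop::VerifiedInheritence} to conclude $\cl(e)$ is $(f,t)$-verified, then either land in Condition~(1)/(2) directly or invoke Lemma~\ref{lem::C4} when $f$ passes through $t$ on $\cl(e)$. Your explicit discussion of the rescaling from a lower-dimensional face to $[0,1]^{\dim(e)}$ is a detail the paper leaves implicit, but otherwise the arguments coincide.
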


\begin{proof}
If $e$ is a cell in $ \cE_{\cG}$, then there exists some $(f,t)$-verified  cube $C \in \cG$ which contains $e$. 
By Lemma \ref{prop::VerifiedInheritence} it follows that $\cl(e)$ is $(f,t)$-verified.  
If either $f(\cl(e)) >t$ or $f( \cl( e)) \leq t$, then $\cl(e)$ will satisfy Condition (1) or (2) in Definition~\ref{def::CompatibleCW}, respectively.  
Otherwise $f$ passes through $t$, and so by Lemma \ref{lem::C4} there exists a deformation retraction $h$ of  the set $ X_t \cap \cl(e)$ onto $X_t \cap \bd(e)$. 
Moreover  the function $f \circ h(\bx, s)$ is non-increasing in $s$. 
Thereby $\cE_\cG$ is compatible  with $X_t$.
\end{proof}

\begin{remark}
The fact that the function  $f \circ h(\bx, s)$ is non-increasing in $s$ implies that the sets $\cX_t$ obtained using the dyadic grid can be used to produce a cellular filtration (see Theorem \ref{prop::ApproximateFiltration}).
\end{remark}

\subsection{Algorithm for Constructing a Verified Grid}

Recall that a dyadic grid $\cG$ is  $(f,t)$-verified if every cube $C \in \cG$ is  $(f,t)$-verified. To construct a verified grid we start with a grid $\cG$ containing a single cube $[0,1]^N$. Then we iteratively refine the cubes  $C \in \cG$ that are not  $(f,t)$-verified.
If $C$ is not $(f,t)$-verified, then we replace it by $2^N$ dyadic cubes inside $C$ whose edge length equals to half of the edge length of $C$. 
We call those smaller cubes the offspring of $C$.
If  the following algorithm terminates,  then the returned grid $\cG$ is $(f,t)$-verified.\\

\noindent $\cG$ = \textbf{\texttt{ConstructVerifiedGrid}}$(f, t)$  
\begin{algorithmic}
\STATE $\cG = \setof{[0,1]^N}$
\WHILE{there exists a cube   $C \in \cG$ which is not $(f,t)$-verified}
\STATE Remove $C$ from $\cG$
\STATE Add the offspring of $C$  to $\cG$
\ENDWHILE
\RETURN $\cG$
\end{algorithmic}

\section{Data Structures and Algorithms}
\label{sec:DataAndAlgorithms}

\begin{figure}[tb]
\subfigure{\includegraphics[height = .22\textwidth]{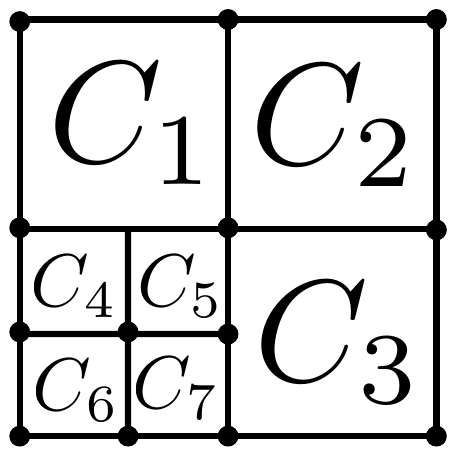}}
\subfigure{\includegraphics[height = .22\textwidth]{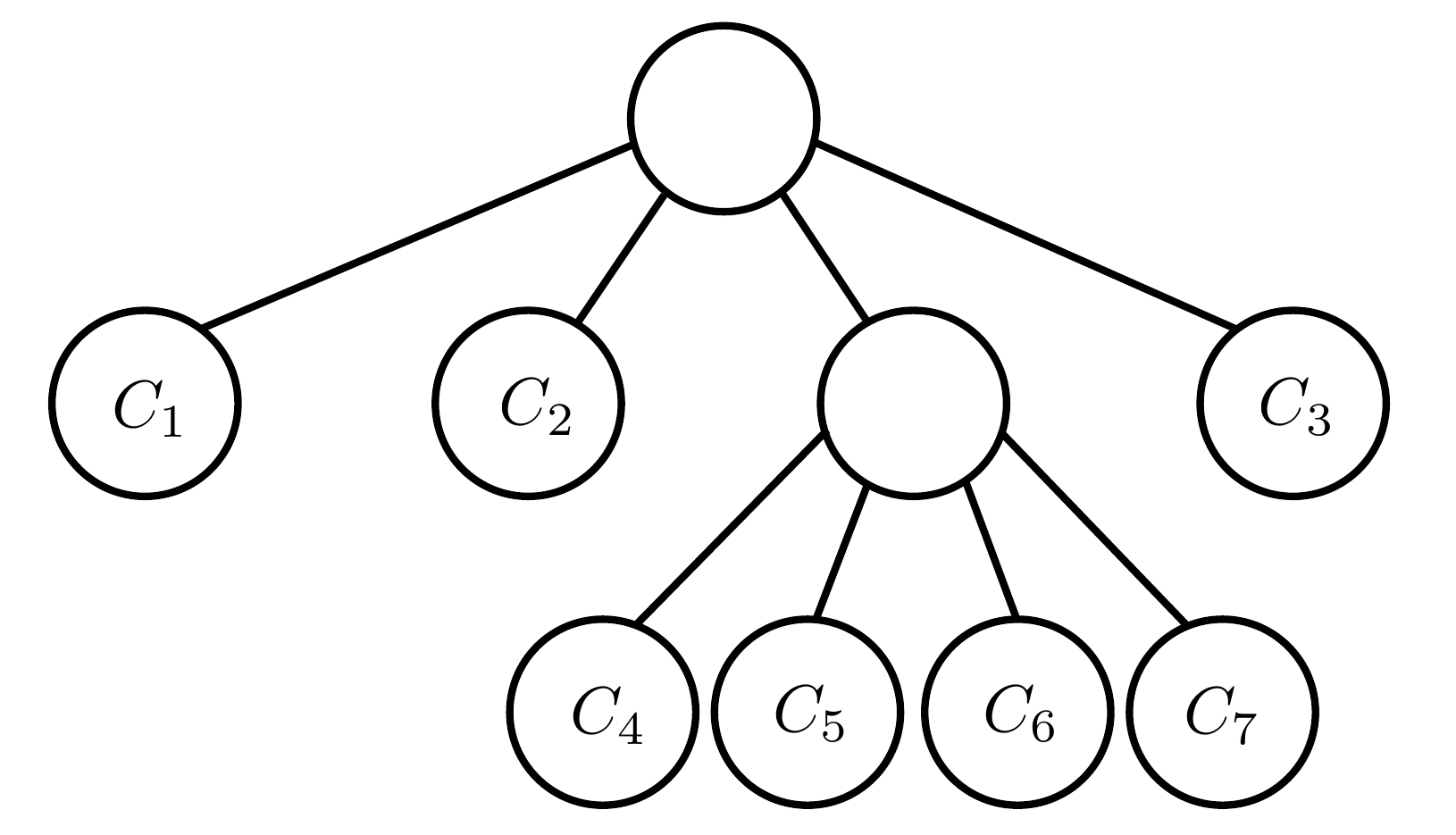}}
\subfigure{\includegraphics[height = .22\textwidth]{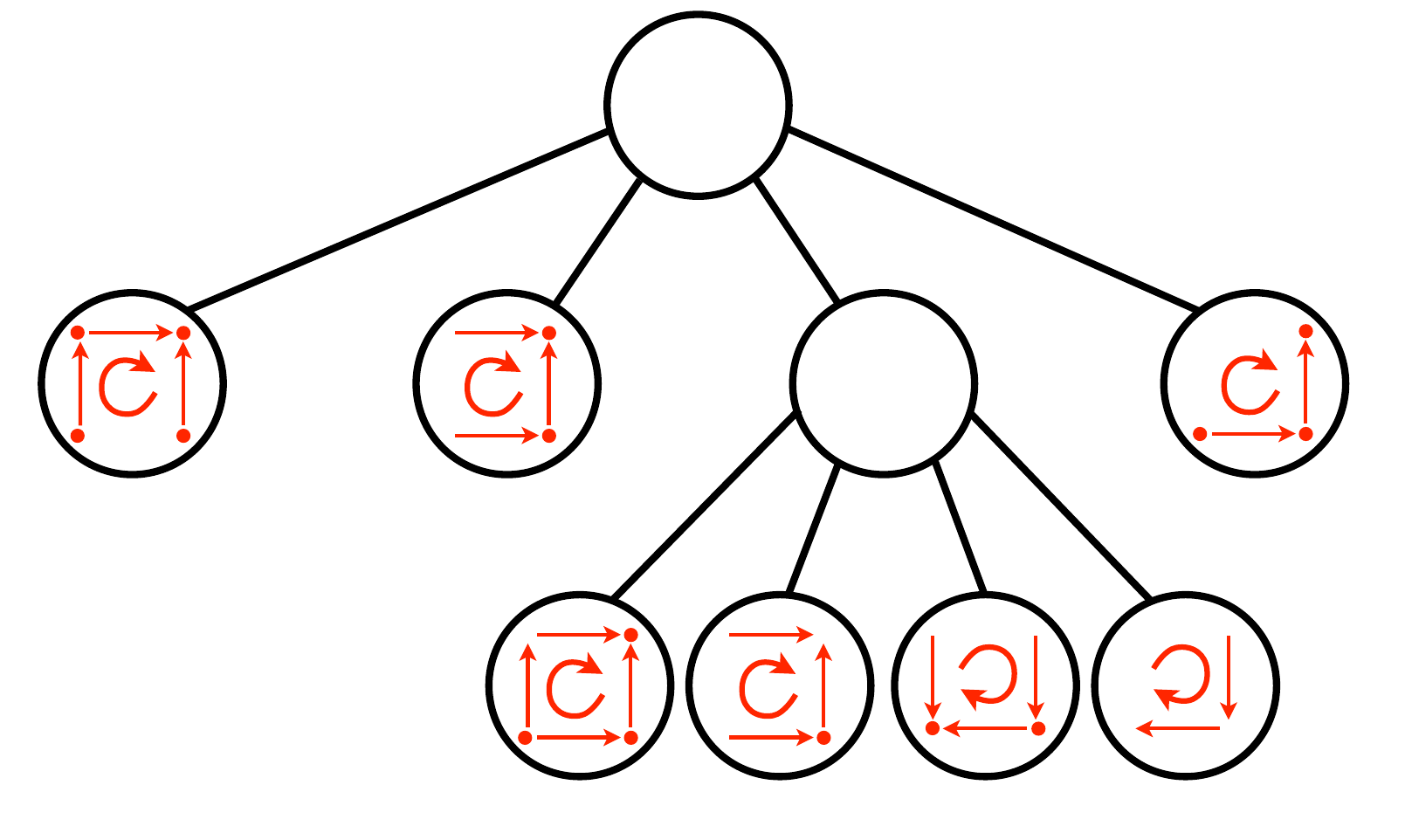}}
\caption{(a)  A  dyadic  cubical grid $\cG$ with seven  building blocks. (b) Tree representation of $\cG$. A building block $C_i \in \cG$ corresponds to the leaf with  same label. (c)  Basis $\Lambda_\cG$ chosen in Figure~\ref{fig:DyadicGrid}(c) is  stored in the leaves of the tree corresponding to $\cG$.}
\label{fig::Tree}
\end{figure}

In order to compute the homology of the sub-level set $X_t \subseteq X$ we need to develop data structures for storing the basis $C_*(\cX_t)$ and the boundary operator. In Section~\ref{sec:Representation} we introduce a representation of  the basis $\Lambda_\cG$ associated with the grid $\cG$. A special structure of  self similar grid  is exploited to devise a memory efficient data structure. We do not store the boundary operator for the basis $\Lambda_\cG$ which is usually  a large (sparse) matrix and requires a lot of memory to be stored. 
Only the incidence numbers for the coarse CW structure $\cE_{\cG}$ need to be stored. In the case of a unit cube the incidence numbers can be computed and the memory requirements are further reduced.

In Section~\ref{sec:BoundaryOperator}, the boundary operator $\partial_n \colon \Lambda^n_\cG \to \Lambda^{n-1}_\cG$ is computed using Formula~(\ref{eqn:boundary}).  
Evaluating the terms in (\ref{eqn:boundary}) involves manipulating geometric objects. 
This might be complicated for a general self similar grid. 
However for  a dyadic grid it can be performed using a special representation of the cells, as explained in  Section~\ref{sec:DyadicGrid}.

\subsection{Representation of $\Lambda_\cG$}
\label{sec:Representation}

A collection of  building blocks $\cB$, given by Definition~\ref{def:BuildingBlocks}, can be partially ordered by  inclusion. For $C_i,C_j \in \cB$ we say that $C_i$ precedes $C_j$ and write   $C_i \prec C_j $ if $C_j \subseteq C_i$. 
The fact that $X \prec C$ for all $C \in \cB$ implies that the graph representing the partial ordering is a tree $T_\cB$ with the root corresponding to the set $X$.   Therefore a self similar grid $\cG = \setof{C_i}_{i=0}^M$ can be represented by a subtree $T_\cG$ of $T_\cB$   containing  the vertices corresponding to the building blocks $C_i\in \cG$ and their predecessors. 

Figure~\ref{fig::Tree} shows an example of a  self similar grid with seven building blocks and its tree representation $T_\cG$. There is a one to one correspondence between the building blocks  $C_i \in \cG$ and the leaves of the tree $T_\cG$. The nodes of the tree correspond to the building blocks that were refined in the process of creating the grid.  In our example the root corresponds to the unit square and the unlabeled node in the middle layer to the subdivided cube at the bottom left corner. 

\begin{definition}
For every  node (leaf)  $\bs$  in $T_\cG$ we denote the corresponding building block by $B(\bs)$. 
If $\bs$ is not a leaf, then $B(\bs) \not\in \cG$. 
For a leaf $\bs$ we define 
$\Lambda^n(\bs) := \setof{ (i,\lambda) \in \Lambda^n_\cG \text{ stored in the leaf } \bs}$.
\end{definition}

The basis $\Lambda_\cG$  is  stored in the leaves of the tree $T_\cG$.   
Figure~\ref{fig::Tree}(c) shows how the basis of the CW structure chosen in Figure~\ref{fig:DyadicGrid}(c) is stored. In general, the leaf corresponding to the building block $C_i \in \cG$ stores the basis elements $(i,\lambda) \in \Lambda_\cG$.

\subsection{Boundary Operator}
\label{sec:BoundaryOperator}
Evaluation of Formula (\ref{eqn:boundary})  requires construction of the set $\cN(i,\mu)$.
In this section we present a tree traversing algorithm for  constructing  $\cN(i,\mu)$. The algorithm consists of two parts. In the first part,  the algorithm moves up the tree. 
It stops  at the node $\bs$ such that $|i,\mu| \subseteq \inter(B(\bs))$. We will prove that  the elements of $\cN(i,\mu)$ are stored in the leaves that can be reached by descending down form  the node $\bs$. In the second part, the algorithm descends to the leaves that can contain  elements of $\cN(i,\mu)$ and retrieves  them. Before stating the algorithm  we recall some basic terminology.

\begin{definition}
Let $T$ be a tree. Suppose that $\bs,\bs'$ are two nodes of $T$ connected by an edge. We say that $\bs$ is a predecessor (child) of $\bs'$ if the path from $\bs$ to the root is shorter (longer) than the path from $\bs'$ to the root.
\end{definition}

\noindent $\cN$ = \textbf{\texttt{ConstructN}}$(i,\mu)$  
\begin{algorithmic}
\STATE Let $\bs$ be a leaf corresponding to $C_i \in \cG$.
\WHILE{ $|i,\mu| \not\subseteq \inter(B(\bs))$ and $\bs$ is not the root }
\STATE Set $\bs$ to the predecessor of $\bs$
\ENDWHILE

\STATE Put $\bs$ to the stack $\cH$

\WHILE{ $\cH \neq \emptyset$}

\STATE Remove one element  $\bs$ from  $\cH$.

\IF{$\bs$ is a leaf}

\FOR{ every $(j,\tau) \in \Lambda^{\dim(\mu)}(\bs)$ such that $|i,\mu| \cap |j,\tau| \neq \emptyset$ }
\STATE Put  $(j,\tau)$ to $\cN(i,\mu)$
\ENDFOR

\ELSE

\FOR{ every child $\bs'$ of $\bs$ such that $|i,\mu| \cap B(\bs') \neq \emptyset$}
\STATE Add $\bs'$ to  $\cH$
\ENDFOR

\ENDIF
\ENDWHILE

\RETURN $\cN$
\end{algorithmic}

The following lemma guarantees that the set $\cN$  retrieved by  \textbf{\texttt{ConstructN}}$(i,\mu)$  is equal to the set $\cN(i,\mu)$ given by~(\ref{eqn:boundary}).

\begin{lemma}
For every basis element $(i,\mu) \in \Lambda_\cG$ the algorithm \textbf{\texttt{ConstructN}}$(i,\mu)$  returns the set $\cN(i,\mu)$.
\end{lemma}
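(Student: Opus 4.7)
The plan is to establish both set inclusions $\cN \subseteq \cN(i,\mu)$ and $\cN(i,\mu) \subseteq \cN$, where $\cN$ denotes the output of the algorithm. Soundness, $\cN \subseteq \cN(i,\mu)$, is immediate: the only statement that commits an element to $\cN$ is guarded by the test $|i,\mu| \cap |j,\tau| \neq \emptyset$, which is exactly the defining condition of $\cN(i,\mu)$. All of the real work lies in the reverse inclusion.

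To prove $\cN(i,\mu) \subseteq \cN$, I would fix an arbitrary $(j,\tau) \in \cN(i,\mu)$ and let $\bs^{*}$ be the node returned by the ascending loop. A short induction on the number of iterations records the invariant $|i,\mu| \subseteq B(\bs)$: it holds at the start because $|i,\mu| \subseteq C_i = B(\bs_0)$, and each predecessor step replaces $B(\bs_k)$ by a larger block $B(\bs_{k+1}) \supseteq B(\bs_k)$. The central lemma is then that the leaf storing $(j,\tau)$ lies in the subtree rooted at $\bs^{*}$. For this I would invoke Definition~\ref{def:BuildingBlocks}(2): the top cells of $C_j$ and $B(\bs^{*})$ are either nested or interior-disjoint, and since $C_j$ is a leaf it cannot properly contain the ancestor $B(\bs^{*})$, leaving the alternatives $C_j \subseteq B(\bs^{*})$ or $\inter(C_j) \cap \inter(B(\bs^{*})) = \emptyset$. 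In the terminating case $|i,\mu| \subseteq \inter(B(\bs^{*}))$ the second alternative would force $|j,\tau| \cap |i,\mu| \subseteq C_j \cap \inter(B(\bs^{*})) = \emptyset$, contradicting $(j,\tau) \in \cN(i,\mu)$, so $C_j \subseteq B(\bs^{*})$.

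With the localization in place, the descending phase is handled by a routine induction on depth in $T_\cG$. At each internal node $\bs$ popped from the stack, the children $\bs''$ whose blocks satisfy $|i,\mu| \cap B(\bs'') \neq \emptyset$ are exactly the ones that can contain a descendant leaf whose basis elements meet $|i,\mu|$, because sibling building blocks tile $B(\bs)$ with pairwise disjoint interiors. It follows by induction on depth that the leaf storing $(j,\tau)$ is eventually popped, at which point the explicit intersection check inside the inner \emph{for} loop commits $(j,\tau)$ to $\cN$. Termination of both loops is immediate from the finiteness of $T_\cG$.

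The main obstacle I anticipate is the edge case in which the ascending loop exits at the root rather than because the strict interior condition is met; this occurs precisely when $|i,\mu|$ touches $\partial X$. There the interior-disjointness dichotomy used above has to be replaced by a boundary-aware version that appeals to Definition~\ref{def:SelfSimilarGrid}(2) to conclude that every $C_j \in \cG$ meeting $|i,\mu|$ still sits in the subtree below the root. Once that case is handled the remainder of the argument carries over with no change.
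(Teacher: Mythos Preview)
Your proposal is correct and follows essentially the same route as the paper: split into the ascending and descending phases, use the nesting dichotomy from Definition~\ref{def:BuildingBlocks}(2) together with the interior-disjointness of grid elements (Definition~\ref{def:SelfSimilarGrid}(2)) to argue that the leaf for $C_j$ must lie below $\bs^{*}$, and then observe that the descending loop visits exactly the leaves whose blocks meet $|i,\mu|$. One small remark: you flag the root case as the ``main obstacle,'' but in fact it is the trivial case---if $\bs^{*}$ is the root then every leaf of $T_\cG$ lies below it by definition, so no dichotomy is needed; the paper disposes of it in one clause.
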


\begin{proof}
Let $\bs$ be the node  reached at the end of the ascending part (first while cycle) of the algorithm. 
We show that  every leaf containing an element of $\cN(i,\mu)$ can be reached by descending from the node $\bs$. If  $\bs$ is the root, then this is trivial. If $\bs$ is not the root, then $|i,\mu| \subseteq \inter(B(\bs))$. 
For every $(j,\tau) \in \cN(i,\mu)$ the cell $|j, \tau| \subseteq |i,\mu| \subseteq \inter (B(\bs))$. 
So $\inter(C_i) \cap \inter( B(\bs) ) \neq \emptyset$ for every building block $C_j \in \cG$, such that   $|j,\tau| \subseteq C_j$.
The interior of every building block is formed by a single $N$ dimensional cell.
Hence by Condition~(2) of Definition~\ref{def:BuildingBlocks} either  $\inter( C_j )\subseteq \inter (B(\bs))$  or $\inter( B(\bs) )\subseteq \inter (C_j )$. 
If $\inter( B(\bs) )\subseteq \inter (C_j )$, then  there exists a building block $C_k \in \cG$, corresponding to some leaf below $\bs$, such that $\inter(C_j) \cap \inter(C_k) \neq \emptyset$. 
This  violates Condition~(2) of Definition~\ref{def:SelfSimilarGrid}.  Therefore $C_j \subseteq B(\bs)$ and the  leaf, containing $(j,\tau)$ can be reached by descending from the node $\bs$.

In the descending part,  the algorithm reaches every leaf $\bs$ corresponding to a building block $C_j$ that intersects $|i,\mu|$. All the elements of $\Lambda^{\dim(\mu)}(\bs)$ which intersect $|i,\mu|$ are added to $\cN$. Hence $\cN = \cN(i,\mu)$.
\end{proof}

\subsection{Dyadic Grid}
\label{sec:DyadicGrid}

\begin{figure}[tb]
\subfigure{\includegraphics[height = .32\textwidth]{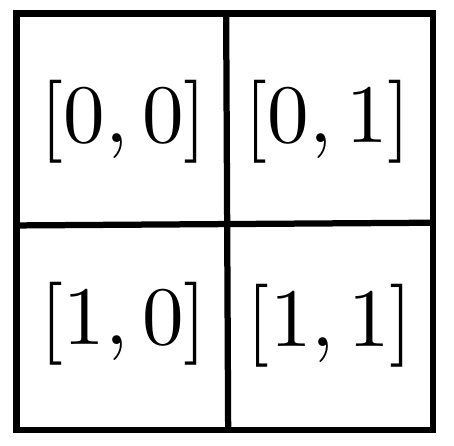}}
\subfigure{\includegraphics[height = .32\textwidth]{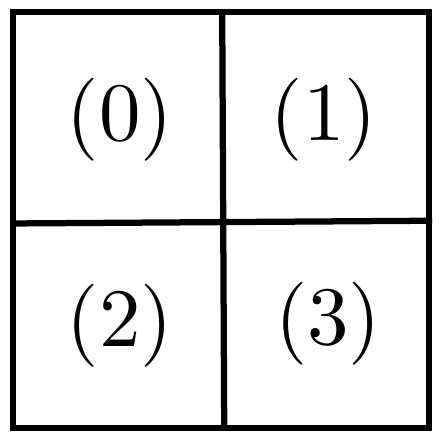}}
\subfigure{\includegraphics[height = .32\textwidth]{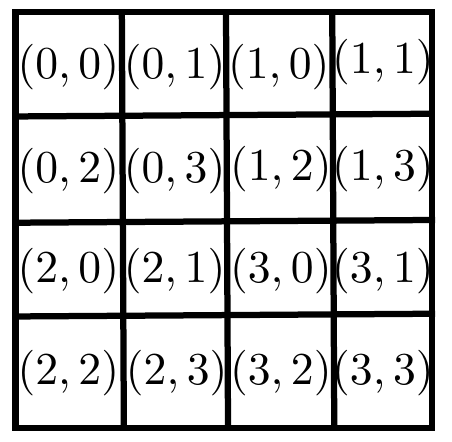}}
\caption{ (a) Binary (b) decimal indices for the dyadic cubes obtained by one subdivision of a unit square. (c) Binary indices for the dyadic cubes obtained by two subdivision of a unit square.  }
\label{fig:GridLabels}
\end{figure}

In this section we deal with implementation of the algorithm for a dyadic grid. First we label the dyadic cubes. 
Our labeling  schema is motivated by the refinement process applied to the  unit cube. During a single refinement the cube $[0,1]^N$  is   divided into $2^N$ cubes with edge lengths equal to $1/2$. Each of these cubes contains exactly one vertex of the original cube. Let $C$ be a cube containing the vertex $(s^0, s^1, \ldots, s^{N-1})$.
The coordinates of the vertex can be interpreted as a binary number. The number $s = \sum_{n=0}^{N-1} s^n 2^n$ is the index of the cube $C$. Note that  the index contains complete information about  position of the cube. Coordinates of the  smallest vertex (in the lexicographical order)  of $C$  are given by $x_i = \frac{s^i}{2}$, see Figure~\ref{fig:GridLabels}.

Further refinement of the cube $C$, considered above, produces another collection of $2^N$ dyadic cubes. Each of these cubes  contains exactly one vertex of $C$. Again, the position of each cube inside    $C$ is encoded by a binary number with $N$ digits.  Therefore every cube obtained by two consecutive refinements of $[0,1]^N$ is indexed by a sequence $(s,s')$, see Figure~\ref{fig:GridLabels}(c). In general a dyadic cube obtained by $n$ subdivisions is uniquely indexed by  $\bs = (s_1,s_2,\ldots, s_n)$ where $s_i \in  \setof{0,1,\ldots, 2^{N}-1}$. The position of the cube corresponding to   $\bs$ is given by 
\begin{equation}
C_{\bs} = \prod_{i = 0}^{N-1} \left[ \sum_{j=1}^n 2^{-j} s_j^i  , \sum_{j=1}^n 2^{-j} (1+ s_j^i )  \right],
\label{eqn:CubeCoordinates}
\end{equation}
where $s_j^i$ is the $i$-th binary digit of $s_j$.

 We choose a CW structure for the unit cube to be  the standard cellular structure on $[0,1]^N$. Every cell $e$ is a product of $N$ generalized intervals and can be uniquely indexed by $\lambda =(\lambda^0, \lambda^1, \ldots ,\lambda^{N-1}) \in \Z_3^N$. In particular the cell corresponding to the index $\lambda$ is $|\lambda| = I_0 \times I_1 \times \ldots \times I_{N-1}$ where 
\[
I_i = \begin{cases}
0 & \text{if $\lambda^i = 0$} ,\\
1 & \text{if $\lambda^i = 1$} ,\\
(0,1) & \text{if $\lambda^i = 2$.}
\end{cases}
\]
If the cell $e'$ is a translation of the cell $e$, then $\Phi_{e'} = T \circ \Phi_e$, where $T$ is the translation map.
The characteristic map of the cell $e$ whose closure  contains the origin can be chosen arbitrarily.  The orientation of the cells $e'$ that do not contain the origin in their closure is given by $\Phi_{e'}$.

To define  a CW structure on $C_\bs$ we use the homeomorphism $h_{\bs} \colon [0,1]^N \to C_\bs$ defined coordinate-wise by 
\begin{equation}
h_i(\bx) = 2^{-n} x_i + \sum_{j = 1}^n 2^{-j}s_j^i,
\label{eqn:H}
\end{equation}
where $h_{\bs} =(h_0(\bx), h_1(\bx), \ldots , h_{N-1}(\bx))$.
Our choice of orientation and homeomorphisms $h_\bs$ guarantees that two intersecting cells have the same orientation. So Formula~ (\ref{eqn:boundary}) reduces to 
\[
\partial_n(i,\lambda) = \sum_{\mu \in \Lambda^{n-1}} [\lambda : \mu ] \sum_{(j,\tau) \in \cN(i,\mu)} (j,\tau). 
\]
Implementation of the algorithm \textbf{\texttt{ConstructN}}$(i,\mu)$ requires evaluation of  intersections of the cells.  For the dyadic grid the cells are cubes and their intersection can be computed from the cell index $(\bs,\lambda)$ using (\ref{eqn:CubeCoordinates}) and  (\ref{eqn:H}). In practice we do not compute the intersections. Instead we formally manipulate  indices of the cells to obtain the index of the cell corresponding to the intersection.

\section{Application}

\subsection{ Error Estimates for Persistence Diagrams}
The algorithm we have described for computing persistence diagrams largely depends on constructing $(f,s_{i})$-verified grids at a variety of thresholds. 
Depending on how many $(f, s_i)$-verified grids we are able to construct, the following corollary provides an \emph{a posteriori} bound on the bottleneck distance between our calculated persistence diagrams and the true persistent diagrams for our function.

\begin{corollary}
\label{prop::FinalResult}
Fix a sequence of real numbers $\setof{s_i}_{i=-1}^{m+1}$ with the following properties:
\begin{enumerate}
\item $s_{-1} = -\infty$ and  $s_0 <  \min_{x\in X} f(x)$, 
\item $s_{m+1} = \infty$ and  $s_{m} > \max_{x\in X} f(x)$.
\end{enumerate}
Suppose there exists a sequence of grids $\{ \cG_{s_i} \}_{i=-1}^{m+1}$ such that each grid $\cG_{s_i}$ is $(f,s_i)$-verified. 
Define $\{\cE_{s_i} \}_{i=-1}^{m+1}$ to be the associated CW complexes, define  $\{ \bar{\cX}_{s_i} \}_{i=-1}^{m+1}$ to be the corresponding approximate filtration, and define $PD'$ to be the persistence diagram associated with  $\{ \bar{\cX}_{s_i} \}_{i=-1}^{m+1}$.  
Then 
\[
d_B ( PD(f) , PD') \leq \varepsilon 
\]
 where $\varepsilon  = \max_{1 \leq i \leq m} \abs{s_{i} - s_{i-1}} $. 
\end{corollary}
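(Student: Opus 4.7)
The plan is to chain together the main results that have already been established in the paper, so that the corollary becomes essentially a bookkeeping exercise. The overall strategy is: verified $\Rightarrow$ compatible CW structures with monotone retractions $\Rightarrow$ persistence diagrams of $\{X_{s_i}\}$ and $\{\bar{\cX}_{s_i}\}$ agree $\Rightarrow$ the bottleneck bound follows from closeness of the diagram of any $\varepsilon$ approximate filtration to $\pd(f)$.

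First, I would invoke Theorem~\ref{thm::VerifiedMeansCompatible} to conclude that for each $i$, the CW structure $\cE_{s_i} = \cE_{\cG_{s_i}}$ generated by the $(f,s_i)$-verified dyadic grid $\cG_{s_i}$ is compatible with $X_{s_i}$ in the sense of Definition~\ref{def::CompatibleCW}. Next, I would note that the grids $\{\cG_{s_i}\}$ are all dyadic subgrids of the same collection $\cB$ of dyadic building blocks of $[0,1]^N$; by the lemma stating that CW structures generated by self similar grids drawn from a common $\cB$ are commensurable, the collection $\{\cE_{s_i}\}_{i=-1}^{m+1}$ satisfies Definition~\ref{def::commensurable}. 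This allows us to form the common refinement $\cE$ and the cellular approximations $\bar{\cX}_{s_i} = \bigcap_{j \ge i}\cX_{s_j}$ from equation~(\ref{eqn::filtarion}), which by Remark~\ref{rem::Inclusions} form a genuine filtration of $X$.

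Second, hypotheses (1) and (2) of the corollary, combined with the defining inequality $|s_i - s_{i-1}| \le \varepsilon$ for $1 \le i \le m$, show that $\{X_{s_i}\}_{i=-1}^{m+1}$ is an $\varepsilon'$ approximate filtration of $f$ (Definition~\ref{def::ApproximateFiltration}) for any $\varepsilon' > \varepsilon$. Consequently $\{\bar{\cX}_{s_i}\}_{i=-1}^{m+1}$ is an $\varepsilon'$ approximate cellular filtration corresponding to $\{\cE_{s_i}\}$. The crucial additional input from verification is supplied by the last sentence of the introduction preceding Definition~\ref{def::IntroAnalyticVerified} (and by Lemmas~\ref{lem::C3}--\ref{lem::C4}): whenever a cell of a verified cube satisfies Condition~(3) of Definition~\ref{def::CompatibleCW}, the deformation retraction $h$ can be chosen so that $f \circ h(\bx,s)$ is non-increasing in $s$. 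This places us exactly in the hypothesis of Theorem~\ref{prop::ApproximateFiltration}, which then yields
\[
\pd\bigl(\{X_{s_i}\}_{i=-1}^{m+1}\bigr) \;=\; \pd\bigl(\{\bar{\cX}_{s_i}\}_{i=-1}^{m+1}\bigr) \;=\; \pd'.
\]

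Finally, applying Lemma~\ref{prop::closeApproximation} to the $\varepsilon'$ approximate filtration $\{X_{s_i}\}$ gives
\[
d_B(\pd(f),\pd') < \varepsilon'
\]
for every $\varepsilon' > \varepsilon$, hence $d_B(\pd(f),\pd') \le \varepsilon$, which is the desired estimate. The main obstacle I foresee is not really a new obstacle at all, but rather the careful verification that the setup of the corollary genuinely triggers each of the prior results; in particular one must confirm that commensurability of the $\cE_{s_i}$ and the monotonicity of $f\circ h$ along retractions (both of which are peculiar to the dyadic/verified construction) are in force, since without either of them Theorem~\ref{prop::ApproximateFiltration} would fail and one could only obtain a looser bound via the stability inequality~(\ref{eqn::IntroEstimate}).
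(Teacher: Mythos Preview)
Your proposal is correct and follows essentially the same route as the paper's proof: invoke Theorem~\ref{thm::VerifiedMeansCompatible} for compatibility, use the monotonicity of $f\circ h$ from Lemmas~\ref{lem::C3}--\ref{lem::C4} to trigger Theorem~\ref{prop::ApproximateFiltration}, and then apply Lemma~\ref{prop::closeApproximation} to an $(\varepsilon+\delta)$ (equivalently $\varepsilon'>\varepsilon$) approximate filtration and pass to the limit. Your explicit check of commensurability via the common building-block collection $\cB$ is a welcome addition that the paper leaves implicit.
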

\begin{proof}

By Theorem \ref{thm::VerifiedMeansCompatible} it follows that each CW structure $\cE_{s_i} $ is compatible with $ X_{s_i}$. 
Moreover, for every cell $e\in \cE_{s_i}$ which satisfies Condition~(3) in Definition~\ref{def::CompatibleCW}, the function $f\circ h(\bx,s)$ is non-increasing in $s$. 
It follows from  Theorem \ref{prop::ApproximateFiltration}  that $\{ \bar{\cX}_{s_i} \}_{i=-1}^{m+1}$ and $\{ X_{s_i}\}_{i=-1}^{m+1}$ have the same persistence diagrams. 
For every $\delta >0 $ the sequence $\{ X_{s_i}\}_{i=-1}^{m+1}$ is an $(\varepsilon + \delta)$-approximate filtration. 
Taking the limit as $\delta$ goes to zero, it follows from Lemma \ref{prop::closeApproximation} that $d_B ( PD(f) , PD') \leq \varepsilon $.

\end{proof}

In practice, we do not know \emph{a priori} the thresholds for which we can construct an $(f,s_i)$-verified grid. 
Consequently, we can only give \emph{a posteriori} error estimates for our persistence diagrams.
For example, fix a sequence of real numbers $\{s_{i}\}_{i=-1}^{m+1}$ as above and suppose that it is uniformly spaced. 
That is  $ \Delta = | s_i - s_{i-1}|$ for $ 1 \leq i \leq m$.
If we attempt to create $(f,s_i)$-verified grids corresponding to $\{s_i\}_{i=-1}^{m+1}$ then our resulting persistence diagram will satisfy 
\begin{equation}
d_{B}( PD(f), PD')  \leq  \Delta ( F + 1)
\label{eq:PersistenceError}
\end{equation}
where $F$ is the largest number of consecutive thresholds $s_i$ which we failed to construct $(f,s_i)$-verified grids. 
While we have \emph{a priori} control on choosing $\Delta$, the final error estimate can only be determined after finishing the calculation.

The algorithm we employ for creating an $(f,t)$-verified grid will either terminate successfully or halt after reaching one of two failure conditions. 
The first failure condition is reached if the algorithm has exceeded a predetermined maximum resolution of the domain. 
This condition is imposed to prevent the algorithm from subdividing grid elements \emph{ad infinitum}.
The second failure condition occurs if, by using interval arithmetic, a vertex has an image which is neither strictly greater than nor strictly less than the threshold $t$. 
If this conditions is satisfied, then we are unable to decide whether or not to include that vertex in the approximation.

In general, we only need to worry about the first failure condition, and regions of the sub-level set which are geometrically complex. 
In particular, if a cube $C$ contains a critical point of the function $f$, then it can only be $(f,t)$-verified if it satisfies either $t > f(C)$ or $ t \leq f(C)$.  
Consequently, if $f$ has a critical value very close to the threshold $t$, then a dyadic grid will need to have a very fine resolution in order to be $(f,t)$-verified. 
Additionally, it is sometimes the case that the offspring of a cube which is $(f,t)$-verified are themselves not $(f,t)$-verified. 
For these reasons, having very precise interval arithmetic can help prevent runaway subdivisions of the domain.

\subsection{Computational Results}

We implemented our algorithm for studying the sub-level sets of functions defined on the unit square: $[0,1]^2$. 
The source code can be downloaded from  \cite{website:jaquette-kramar}.
In particular, we studied random Fourier series  of the form
\begin{eqnarray*}
f(x,y) &:=& \sum_{i=1}^N \sum_{j=1}^N 
a_{i,j,1} \sin( 2 \pi i x ) \sin( 2 \pi j y ) + 
a_{i,j,2} \sin( 2 \pi i x ) \cos( 2 \pi j y ) \\ && + 
a_{i,j,3} \cos( 2 \pi i x ) \sin( 2 \pi j y ) + 
a_{i,j,4} \cos( 2 \pi i x ) \cos( 2 \pi j y )
\end{eqnarray*}
where the coefficients $a_{i,j,k}$ were taken from a normal distribution of mean $0$ and standard deviation $1$.
Depicted in Figure \ref{fig::PersistenceDiagrams} is one such computation, where we used $N=5$  as the maximum number of modes.

\begin{figure}[tb]
\subfigure{\includegraphics[height = .31\textwidth]{./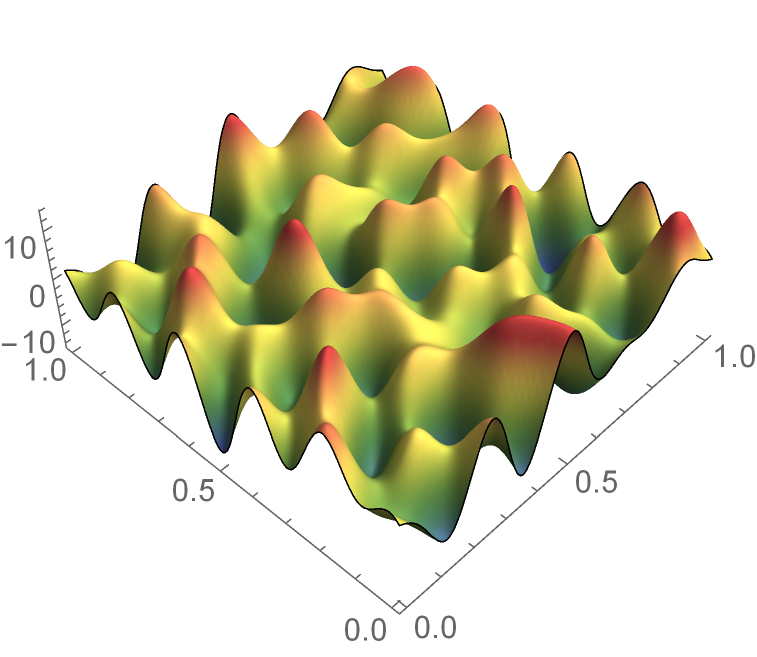}}
\subfigure{\includegraphics[height = .29\textwidth]{./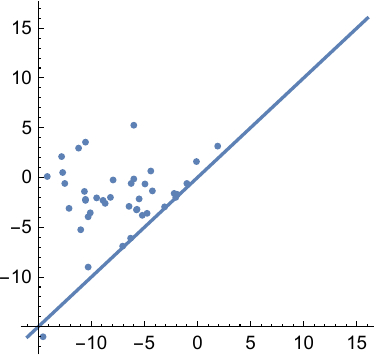}}
\subfigure{\includegraphics[height = .29\textwidth]{./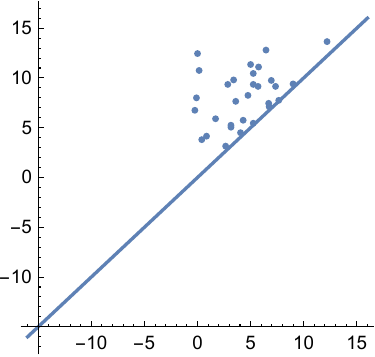}}
\caption{ (a) Plot of a trigonometric polynomial with random coefficients.   (b) $\beta_0$ persistence diagram and (c)  $\beta_1$ persistence diagram.}
\label{fig::PersistenceDiagrams}
\end{figure}

For the filtration we tried to construct $(f,s_i)$-verified grids for thresholds between $-15$ and $15$ using spacings of $0.05$.  
Out of the 601 thresholds, we failed to construct $(f,s_i)$-verified grids for 11 thresholds. 
The largest number of consecutive failures occurred when both $-5.2$ and $-5.15$ failed to be verified.  
Hence the largest gap between any two verified thresholds in our filtration was $0.15$. 
Therefore the persistent diagrams we calculated are within a bottleneck distance of  $0.15$  of the true persistent diagrams for our given function. 

The computation took 193 hours and used 16 MB of memory, and was performed on a single Intel i7-2600 processor.   
The algorithm created a filtered CW structure on $[0,1]^2$ which contained 145,721 cells, which was then processed by the \emph{Perseus} software program to compute its persistent homology \cite{mischaikow2013morse}. 
The $\epsilon$-approximate filtration was constructed sequentially by computing $\bar{\cX}_{s_i}:= \cX_{s_i} \cap \bar{\cX}_{s_{i+1}}$, so at most three CW complexes were stored in memory at a given time. 
At the cost of using more memory, one could reduce the computation time by computing the cellular approximations $\{ \cX_{s_i} \}$ in parallel, and subsequently creating the filtration $\{\bar{\cX}_{s_i}\}$.
Additionally, the one point of departure our implementation took from the algorithm here described is that the cells were represented using a lookup table, as opposed to the product of generalized intervals. 

The reason our algorithm failed to verify the $11$ thresholds is because when attempting to verify them, it surpassed 25 subdivisions of the domain. 
In general, the size of geometric features in these sub-level sets was much larger than $2^{-25}$. 
However, the error bounds produced by interval arithmetic have a risk of creating a cascade of cubes which fail to become $(f,t)$-verified.
In order to compute interval arithmetic more precisely, whenever we computed a function on a cube, we subdivided the cube seven times and computed the function using interval arithmetic on each piece.

When we repeated the computation but instead used six subdivisions, the number of thresholds we failed to verify increased $4.5$ fold and the accuracy of our computed persistence diagram was only $0.4$ in the bottleneck distance (it failed to verify seven consecutive thresholds). 
Using six subdivisions also increased the number of cells in the filtered complex by  $10.5\%$, however the entire computation finished $3.7$ times faster.
We refer the reader to \cite{CWD13randomized,DKW09verified} for more efficient techniques to compute precise interval arithmetic bounds needed by this algorithm.

\section*{Acknowledgments}
The authors would like to thank Konstantin Mischaikow for his insightful comments and conversations.

\bibliographystyle{amsplain}
\bibliography{BibDataBase}
\end{document}